\def\theequation{\@arabic\c@equation}
\newcommand{\e}{\hbox{\rm e}}
\newcommand{\bbR}{{\mathbb{R}}}
\newcommand{\R}{{\mathbb{R}}}
\newcommand{\C}{{\mathbb{C}}}
\newcommand{\cF}{{\mathcal F}}
\newcommand{\cG}{{\mathcal G}}
\newcommand{\cH}{{\mathcal H}}
\newcommand{\cL}{{\mathcal L}}
\newcommand{\cN}{{\mathcal N}}
\newcommand{\cQ}{{\mathcal Q}}
\newcommand{\cX}{{\mathcal X}}
\newcommand{\cY}{{\mathcal Y}}
\newcommand{\no}{\nonumber}
\newcommand{\lb}{\label}
\newcommand{\wti}{\widetilde  }
\newcommand{\bi}{\bibitem}
\newcommand{\mi}{\operatorname{Mas}}
\newcommand{\mo}{\operatorname{Mor}}
\newcommand{\bfi}{{\bf i}}
\newcommand{\sign}{{\rm sign}}
\numberwithin{equation}{section}
\newcommand{\dom}{\operatorname{dom}}
\newcommand{\tr}{\operatorname{tr}}
\newcommand{\re}{\operatorname{Re}}
\newcommand{\Sp}{\operatorname{Spec}}
\renewcommand{\Re}{\operatorname{Re }}
\renewcommand\Im{\operatorname{Im}}
\renewcommand{\ker}{\operatorname{ker}}
\theoremstyle{plain}
\newtheorem{theorem}{Theorem}[section]
\newtheorem{corollary}[theorem]{Corollary}
\newtheorem{proposition}[theorem]{Proposition}
\theoremstyle{definition}
\newtheorem{definition}[theorem]{Definition}
\begin{document}

\allowdisplaybreaks

\title[Maslov index]{Counting spectrum via the  Maslov index for one dimensional $\theta-$periodic Schr\"odinger operators}

\author[C.\ Jones]{Christopher K.\ R.\ T. Jones}
\address{Mathematics Department,
The University of North Carolina, Chapel Hill, NC 27599}
\email{ckrtj@email.unc.edu}
\author[Y. Latushkin]{Yuri Latushkin}
\address{Department of Mathematics,
The University of Missouri, Columbia, MO 65211, USA}
\thanks{Supported by the NSF grant  DMS-1067929, by the Research Board and Research Council of the University of Missouri, and by the Simons Foundation.
 We thank Graham Cox, Gregory Berkolaiko, and Alim Sukhtayev for discussions. Our special thanks go to Jared Bronski  for asking if the parameter $\theta$ can be used to compute the Maslov index, and to Fritz Gesztesy for help.}
\email{latushkiny@missouri.edu, sswfd@mail.missouri.edu}
\author[S. Sukhtaiev]{Selim Sukhtaiev	}
\date{\today}
\keywords{Schr\"odinger equation, Hamiltonian systems, eigenvalues, stability, differential operators, discrete spectrum}

\begin{abstract}
We study the spectrum of the Schr\"odinger operators with $n\times n$ matrix valued potentials on a finite interval subject to $\theta-$periodic boundary conditions. For two such operators, corresponding to different values of $\theta$, we compute the difference of their eigenvalue counting functions via the Maslov index of a path of Lagrangian planes. In addition we derive a formula for the derivatives of the eigenvalues with respect to $\theta$ in terms of the Maslov crossing form. Finally, we give a new shorter proof of a recent result relating the Morse and Maslov indices of the Schr\"odinger operator for a fixed $\theta$.
\end{abstract}
\maketitle

\section{introduction}
Relations between the spectral count and conjugate points for differential operators are of fundamental importance \cite{
Arn85,B56,CZ84,D76,
M63,S65}, and generalize the classical Sturm Theorem saying that the number of negative eigenvalues of the one dimensional scalar differential operator is equal to the number of conjugate points, that is, the zeros of the eigenfunction corresponding to the zero eigenvalue. In the matrix valued or multidimensional case the conjugate points are understood as the points of intersection of a path in the space of Lagrangian planes with the train of a fixed plane determined by the boundary conditions associated with the differential operator. The signed count of the conjugate points is called the Maslov index \cite{
Arn85, BbF95, CLM94, 
F04}.  Recently the relation between the spectral count and the Maslov index attracted much attention \cite{BM,DJ11,DP,CJLS,CJM1,CJM2,CDB06,CDB09,PW}. In particular, formulas relating the number of the negative eigenvalues of the Schr\"odinger operators with periodic potentials and the Maslov index were given in \cite{JLM,LSS}. In this paper we continue this latter work but give it a new spin using $\theta$ as a parameter generating the path of the Lagrangian planes.

The Schr\"odinger operator $H=-\partial_x^2+V$ with a periodic potential $V$ on $\R$ can be represented as the direct integral of $\theta-$periodic Schr\"odinger operators $H_{\theta}$ on $[0,2\pi]$, see \cite[Section XIII]{RS78}. This construction gives rise to the decomposition of the spectrum $\text{Spec}(H)=\cup_{k=1}^{\infty}[\alpha_k,\beta_k]$ into the union of spectral intervals $[\alpha_k,\beta_k]$, with the end points $\alpha_k,\beta_k$ being equal to the $k-$th eigenvalue of the operator with either periodic or anti-periodic boundary conditions. Thus the properties of each fiber operator $H_{\theta}$ and its eigenvalues are of great interest. A natural question in this context is how certain quantitative characteristics of $H_{\theta}$, such as the Morse index $\mo(H_{\theta})=N(0,\theta)$ or, more generally, the eigenvalue counting function $N(r,\theta)$, vary with respect to the parameter $\theta$. We address these questions by employing methods of symplectic geometry. 

Let $[a,b]$ be a finite interval and $V\in L^{\infty}([a,b], \R^{n\times n})$ be symmetric a.e., and consider the eigenvalue  problem for the operator $H_{\theta}:=(-\partial_x^2)_{\theta}+V$ defined in \eqref{1.1}-\eqref{1.3}, that is, the boundary value problem
\begin{align}
&-u''(x)+V(x)u(x)=\lambda u(x),\ \lambda\in\R,\ x\in[a,b], u=(u_1,\dots, u_n)\in \C^n,\lb{1.6}\\
&\quad u(b)=e^{\bfi \theta}u(a),\ u'(b)=e^{\bfi \theta}u'(a),\ \theta\in [0,2\pi) \lb{1.7}.
\end{align}
Problem \eqref{1.6},\eqref{1.7} can be reformulated in terms of existence of {\it conjugate points}. A pair of numbers $(\lambda,\theta)\in\R\times [0,2\pi)$ is called a conjugate point if the subspace $F_{\lambda}^2\subset\C^{4n}$ consisting of the Dirichlet and Neumann boundary traces of solutions to the second order equation \eqref{1.6} has a nontrivial intersection with the set $F_{\theta}^1:=\{(P,e^{\bfi \theta}P,-Q,e^{\bfi \theta}Q)^\top: P,Q\in \C^n\}$. By varying $\theta$ between the given values $\theta_1<\theta_2$ in $[0,\pi]$, and the spectral parameter $\lambda$ between $\lambda_{\infty}$, the uniform lower bound for the spectrum of $H_{\theta}$, and some fixed number $r>\lambda_{\infty}$, we obtain a loop $\gamma$ in the set of Lagrangian planes in $\R^{16n}$. Each intersection of this loop with the diagonal plane $\{(p,p):p\in \R^{8n}\}$ gives a conjugate point. The total number of the conjugate points, corresponding to the part of the loop where $\theta=\theta_1$(respectively, $\theta=\theta_2$), is equal to the number of the eigenvalues of $H_{\theta_1}$(respectively, $H_{\theta_2}$) located below $r$. Using the homotopy invariance property of the Maslov index, one concludes that the total number of conjugate points (counting their signs) is equal to zero. Therefore the difference between the eigenvalue counting functions for $H_{\theta_1}$ and $H_{\theta_2}$ can be evaluated via the Maslov index $\mi(\gamma|_{\lambda=r})$, the total number of the conjugate points (counting their signs) for the part of the loop where $\lambda=r$ is fixed, cf. Figure 1(I). Denoting by $N(r,\theta)$ the number of the eigenvalues of $H_{\theta}$ located below a fixed $r\in\R$, our main result therefore states that, see Theorem \ref{t2.2},
\begin{equation}\lb{i5}
		N(r,\theta_2)-N(r,\theta_1)=\mi(\gamma|_{\lambda=r}).
\end{equation}
Also, we prove its Corollary \ref{cor3.4} saying that
\begin{equation}\lb{i6}
\text{\ if $(\theta_1,\theta_2)\subset (0,\pi)\cup (\pi,2\pi)$, then\ }|N(r,\theta_2)-N(r,\theta_1)|\leq n.\
\end{equation}
In addition, we derive a formula for the derivative of the eigenvalues $\lambda(\theta)$ of $H_{\theta}$ with respect to $\theta$ in terms of the Maslov crossing form, see Theorem \ref{FE}.
\begin{figure}
	\begin{floatrow}
		
		\begin{picture}(100,100)(-20,0)
		\put(78,0){$r$}
		\put(6,8){\vector(0,1){95}}
		\put(5,10){\vector(1,0){95}}
		\put(71,40){\text{\tiny $\Gamma_2$}}
		\put(12,40){\text{\tiny $\Gamma_4$}}
		\put(45,73){\text{\tiny $\Gamma_3$}}
		\put(43,14){\text{\tiny $\Gamma_1$}}
		\put(43,-5){\text{I}}
		\put(100,12){$\lambda$}
		\put(10,98){$\theta$}
		\put(80,20){\line(0,1){60}}
		\put(10,20){\line(0,1){60}}
		\put(80,8){\line(0,1){4}}
		\put(10,2){$\lambda_\infty$}
		\put(10,8){\line(0,1){4}}
		\put(10,20){\line(1,0){70}}
		\put(10,80){\line(1,0){70}}
		\put(80,70){\circle*{4}}
		\put(20,20){\circle*{4}}
		\put(35,80){\circle*{4}}
		\put(45,80){\circle*{4}}
		\put(60,20){\circle*{4}}
		\put(70,20){\circle*{4}}
		\put(20,87){{\tiny \text{eigenvalues}}}
		\put(14,24){{\tiny \,\,\,\text{eigenvalues}}}  
		\put(85,23){\rotatebox{90}{{\tiny conjugate points}}}
		\put(82,16){{\tiny $\theta_1$}}
		\put(82,82){{\tiny $\theta_2$}}
		\put(0,25){\rotatebox{90}{{\tiny no intersections}}}
		\end{picture}
\ \ \ \ \ \ \ \ 
		\begin{picture}(100,100)(-20,0)
		\put(66,0){0}
		\put(76,0){$r$}
		\put(70,8){\vector(0,1){95}}
		\put(5,10){\vector(1,0){95}}
		\put(71,40){\text{\tiny $\Gamma_2$}}
		\put(12,40){\text{\tiny $\Gamma_4$}}
		\put(45,75){\text{\tiny $\Gamma_3$}}
		\put(43,14){\text{\tiny $\Gamma_1$}}
			\put(43,-5){\text{II}}
		\put(100,12){$\lambda$}
		\put(75,100){$s$}
		\put(80,20){\line(0,1){60}}
		\put(10,20){\line(0,1){60}}
		\put(80,8){\line(0,1){4}}
		\put(2,2){$\lambda^{\infty}$}
		\put(10,20){\line(1,0){70}}
		\put(10,80){\line(1,0){70}}
		\put(65,20){\circle*{4}}
		\put(80,50){\circle*{4}}
		\put(80,70){\circle*{4}}
		\put(20,80){\circle*{4}}
		\put(40,80){\circle*{4}}
		\put(60,80){\circle*{4}}
		\put(20,87){{\tiny \text{eigenvalues}}}
		\put(14,24){{\tiny \,\,\,\text{eigenvalues}}}  
		\put(85,23){\rotatebox{90}{{\tiny conjugate points}}}
		\put(82,18){{\tiny $\tau$}}
		\put(82,80){{\tiny $1$}}
		\put(0,25){\rotatebox{90}{{\tiny no intersections}}}
		\end{picture}
			\caption{\ }
	\end{floatrow}
\end{figure} 

A similar in spirit technique can be used to derive formulas for the difference between the Morse indices of $H_\theta$ and its rescaled version $H_{\theta}(t):=-t^{-2}(d^2/dx^2)_{\theta}+V(tx), t\in (0,1]$, here the parameter $\theta$ is fixed, and the role of the varying parameter is played by $t$. The latter setting has been already considered in \cite{JLM}. Section \ref{SectionT} of the current paper provides an alternative proof without using an augmented system employed in \cite{JLM}. 

We point out that the method briefly discussed above is quite general and well suited for Schr\"odinger operators  with self-adjoint boundary conditions on bounded domains in $\R^n$, e.g. Dirichlet, Neumann, Robin, $\vec{\theta}-$periodic. It has been successfully employed in \cite{CJLS}, \cite{CJM1}, \cite{CJM2}, \cite{DJ11}, \cite{LSS}. The novelty of the current approach is in variation of the parameter describing the boundary conditions, not the geometry of domain.      

{\bf Notations.} We denote by $I_n$ and $0_n$ the $n\times n$ identity and zero matrix.  For an $n\times m$ matrix $A=(a_{ij})_{i=1,j=1}^{n,m}$
and a $k\times\ell$ matrix $B=(b_{ij})_{i=1,j=1}^{k,\ell}$, we denote by
$A\otimes B$ the Kronecker product, that is, the $nk\times m\ell$ matrix composed of $k\times\ell$ blocks $a_{ij}B$, $i=1,\dots n$, $j=1,\dots m$. We let $(\cdot\,,\cdot)_{\R^n}$ denote the real scalar product in the space $\bbR^n$ of $n\times 1$ vectors, and let $\top$ denote transposition. When $a=(a_i)_{i=1}^n\in\bbR^n$ and $b=(b_j)_{j=1}^m\in\bbR^m$ are $(n\times 1)$ and $(m\times 1)$ column vectors,  we use notation $(a,b)^\top$ for the $(n+m)\times 1$ column vector with the entries $a_1,\dots,a_n,b_1,\dots,b_m$ (just avoiding  the use of $(a^\top,b^\top)^\top$). We denote by $\cL(\cX)$ the set of linear bounded operators and by $\Sp(T)=\Sp(T; \cX)$ the spectrum of an operator on a Hilbert space $\cX$. Finally, we use notation $J$ and $M_{\theta}$ for the following $2\times 2$ matrices
\begin{equation}\label{dfnJM}J:=\left[\begin{matrix}
0& 1\\
-1& 0\\
\end{matrix}\right],\ \ 
 M_{\theta}:=\left[\begin{matrix}
 \cos\theta & -\sin\theta\\
 \sin\theta & \cos\theta
 \end{matrix}\right],\ \theta\in[0,2\pi).
 \end{equation}
 

\section{Symplectic Approach To The Eigenvalue Problem}
In this section we introduce a framework for the sequel: firstly, we give a formal definition of the unperturbed $\theta-$periodic Laplacian on $L^2([a,b],\C^n),\ -\infty<a<b<\infty$, secondly, we discuss the symplectic approach to the eigenvalue problem, and, finally, we recall the definition of the Maslov index.

Given a $\theta\in[0,2\pi)$, we consider the operator $(-\partial_x^2)_{\theta}$, defined as follows
\begin{align}
(-\partial_x^2)_{\theta}&: L^2([a,b],\C^n) \rightarrow L^2([a,b],\C^n),\lb{1.1}\\
\dom(-\partial_x^2)_{\theta}&:=\Big\{u\in L^2([a,b],\C^n): u, u'\in AC([a,b],\C^n), u''\in L^2([a,b],\C^n)\no\\
&\qquad\qquad u(b)=e^{\bfi\theta}u(a)\ \text{and\ } u'(b)=e^{\bfi\theta}u'(a)\Big\},\lb{1.2}\\
(-\partial_x^2)_{\theta}u&:=-u'',\ \ u\in \dom(-\partial_x^2)_{\theta}.\lb{1.3}
\end{align}  
The operator $(-\partial_x^2)_{\theta}$ is self-adjoint, non-negative, and its spectrum is discrete (see, e.g., \cite{RS78} for more details). Next, we consider the Schr\"odinger operator $H_{\theta}:= (-\partial_x^2)_{\theta}+V$ with a matrix potential $V$. Throughout this paper we assume that the potential is bounded and symmetric, $V\in L^{\infty}([a,b],\R^{n\times n}),\ V^{\top}={V}$. For the operator $H_{\theta}$ we introduce the counting function $N(\cdot,\theta)$, that is, we denote the number of its eigenvalues smaller than $r$ by $N(r,\theta)$,
\begin{equation}\lb{1.4}
N(r,\theta):=\sum\nolimits_{\lambda<r}\dim _{\C}\ker (H_\theta-\lambda).
\end{equation}
We remark that $N(r,\theta)$ is well defined. Indeed, $V$ is a relatively compact perturbation of $(-\partial_x^2)_{\theta}$, thus $H_{\theta}$ has compact resolvent and is bounded from below for each $\theta$. Moreover, the boundedness of $V$ also implies that 
\begin{equation}\lb{1.5}
	\lambda_{\infty}:=\inf\nolimits_{\theta\in [0,2\pi)}\min\{\lambda:\lambda\in \Sp(H_{\theta})\}>-\infty.
\end{equation}

Next we turn to the eigenvalue problem \eqref{1.6},\eqref{1.7}. We recast the existence of non-zero solutions to \eqref{1.6},\eqref{1.7} in terms of intersections of two families, $F^1_{\theta}$ and $F^2_{\lambda}$, of $2n$ dimensional subspaces of $\C^{4n}$. Namely, the first family is determined by the boundary conditions \eqref{1.7} and is defined by
\begin{equation}\lb{1.8}
F^1_{\theta}:=\{(P, e^{\bfi \theta} P, -Q, e^{\bfi \theta}Q)^{\top}: P,Q\in \C^{n}\},\ \ \theta\in[0,2\pi),	
\end{equation}   
the second one is given by the traces of solutions to \eqref{1.6} and is defined by
\begin{equation}\lb{1.9}
F^2_{\lambda}:=\{(u(a), u(b), -u'(a), u'(b))^\top: -u''+Vu=\lambda u\},\ \ \lambda\in\R.	
\end{equation} 
Then, obviously, $\ker (H_{\theta}-\lambda)\not=\{0\}$ if and only if $F^1_{\theta}\cap F^2_{\lambda}\not=\{0\}$. As we will see below, the complex subspaces $F^1_{\theta},F^2_{\lambda}$ give rise to the real Lagrangian planes in $\Lambda(4n)$, thus allowing us to use tools from symplectic geometry. Combining this with some geometric properties of the Maslov index (mainly its homotopy invariance), we will be able to relate $N(r,\theta_1)$ and $N(r,\theta_2)$ through the Maslov index of a certain path in $\Lambda(4n)$. Furthermore, varying a parameter obtained by rescaling the operator to a smaller segment, we will compute the Maslov index, thus providing an alternative proof of the results obtained in \cite{JLM}, and estimate the Maslov index using the boundary conditions \eqref{1.7}, when $\theta$ is the varying parameter. 

Having outlined the main idea, we now switch to a more technical discussion. Our first objective is to recall from \cite{F04}(cf., \cite{BbF95}) the definition of the Maslov index, $\mi(\Upsilon,\cX)$,  for a continuous path $\Upsilon\in C([c,d],\Lambda_{\omega}(m))$; here $m\geq 1$, and we denote by $\Lambda_{\omega}(m)$ the metric space of the Lagrangian planes in $\R^{2m}$ with respect to a symplectic bilinear form $\omega$, and $\cX$ is a given Lagrangian plane in $\Lambda_{\omega} (m)$ so that $\dim \cX=m$ and $\omega$ vanishes on $\cX$. Given a subspace $\cY\subset \R^{2m}$ we denote by $P_{\cY}$ the orthogonal projection onto $\cY$. Then, following \cite[Section 2.4]{F04}, we introduce the Souriau map $S_{\cX}$ associated with the given Lagrangian plane $\cX$:
 \begin{align}
& S_{\cX}:\Lambda_{\omega}(m)\rightarrow U(\R^{2m}_{\omega}),\ S_{\cX}(\cY):= (I_{2m}-2P_\cY)(2P_{\cX}-I_{2m}),\no
\end{align}
where $U(\R^{2m}_{\omega})$ denotes the set of unitary operators on the complex space $\R^{2m}_{\omega}$. The complex vector space $\R^{2m}_{\omega}=\cX\oplus\cX^{\perp}=\cX\oplus\Omega\cX=\cX\otimes\C$ is defined via the given complex structure $\Omega$, that is, the operator satisfying
$\Omega^2=-I_{2m}, \Omega^{\top}=-\Omega, \omega(u,v)=(u,\Omega v)_{\R^{2m}}$, see \cite[Section 2.4]{F04}. For a vector $u\in \R^{2m}$, we write $u_1:=P_{\cX}u, u_2:=u-P_{\cX}u$, and define the multiplication in $\R^{2m}_{\omega}$ by
\begin{equation}\lb{1.11}
(\alpha+\bfi \beta)u:=\alpha u_1-\beta u_2+\Omega(\alpha u_2+\beta u_1),\ \alpha,\beta\in\R,
\end{equation}  
and the complex scalar product on $\R^{2m}_{\omega}$ by
\begin{equation}\lb{1.12}
(u,v)_{\omega}:=(u,v)_{\R^{2m}}-{\bfi} \omega(u,v),\ \ u,v\in \R^{2m}.
\end{equation}  
We remark that the right hand sides of \eqref{1.11},\eqref{1.12} do not depend on the choice of the Lagrangian plane $\cX$. The following property of the Souriau map from \cite[Proposition 2.52]{F04} gives rise to the spectral flow argument essential for the definition of the Maslov index of the flow $\Upsilon(\cdot)$ relative to the subspace $\cX\in\Lambda(m)$:
\begin{equation}\lb{1.12.1}
\dim_{\R}(\cX \cap \cY)= \dim_{\C}\ker(S_{\cX}(\cY)+I_{2m}),\ \cX,\cY\in \Lambda(m).
\end{equation}
Setting $\upsilon:t\mapsto S_{\cX}(\Upsilon(t))$ for $t\in[c,d]$, the Maslov index of $\Upsilon$ is defined as the spectral flow through the point $-1$ of the spectra of the family $\upsilon$ of the unitary operators in $\R^{2m}_{\omega}$. To proceed with the definition, note that there exists a partition $c=t_0<t_1<\cdots<t_N=d$ of $[c,d]$ and positive numbers $\varepsilon_j\in(0,\pi)$, such that  $\e^{\bfi(\pi+\varepsilon_j)}\not \in \Sp (\upsilon(t))$ for each $1\leq j\leq N$, see \cite[Lemma 3.1]{F04}. For any $\varepsilon>0$ and $t\in[c,d]$ we let $k(t,\varepsilon):=\sum\nolimits_{0\leq \alpha\leq \varepsilon}\ker(\upsilon(t)-\e^{\bfi(\pi+\alpha)})$ and define the Maslov index
\begin{equation}\lb{dfnMInd}
\text{Mas}(\Upsilon,\cX):=\sum\nolimits_{j=1}^{N}\left(k(t_j,\varepsilon_j)-k(t_{j-1},\varepsilon_j)\right),
\end{equation}
see \cite[Definition 3.2]{F04}. By \cite[Proposition 3.3]{F04} the number Mas$(\Upsilon,\cX)$ is well defined, i.e., it is independent on the choice of the partition $t_j$ and $\varepsilon_j$. 

The Maslov index can be computed via crossing forms. Indeed, given  $\Upsilon\in C^1([c,d], \Lambda_{\omega}(m))$ and a crossing $t_*\in[a,b]$ so that $\Upsilon(t_*)\cap\cX\not=\emptyset$, there exists a neighbourhood $\Sigma_0$ of $t_*$ and $R_t\in C^1(\Sigma_0, \cL(\Upsilon(t_*), \Upsilon(t_*)^{\perp}))$, such that $\Upsilon(t)=\{u+R_tu\big| u\in \Upsilon(t_*)\}$, for $t\in \Sigma_0$, see \cite[Lemma 3.8]{CJLS} . We will use the following terminology from \cite[Definition 3.20]{F04}.
\begin{definition}\label{def21} Let $\cX$ be a Lagrangian subspace and $\Upsilon\in C^1([c,d], \Lambda_{\omega}(m))$.
	
	({\it i}) We call $t_*\in[c,d]$ a conjugate point or crossing if $\Upsilon(t_*)\cap \cX\not=\{0\}$.
	
	({\it ii}) The quadratic form $$\cQ_{t_*,\cX}(u,v):=\frac{d}{dt}\omega_{\cH}(u,R_tv)\big|_{t=t_*}=\omega_{\cH}(u, \dot{R}_{t=t_*}v), \text{\ for\ }u,v \in \Upsilon(t_*)\cap \cX,$$  is called the crossing form at the crossing $t_*$.
	
	({\it iii}) The crossing $t_*$ is called regular if the form $\cQ_{t_*,\cX}$ is non-degenerate, positive if $\cQ_{t_*,\cX}$ is positive definite, and negative if $\cQ_{t_*,\cX}$ is negative definite.
\end{definition}	
	\begin{theorem} \cite[ Corollary 3.25]{F04}\lb{masform}
		If $t_*$ is a regular crossing of a path $\Upsilon\in C^1([c,d], \Lambda_{\omega}(m))$ then there exists $\delta>0$ such that
		
		$($i$)$ $\mi(\Upsilon_{|t-t_*|<\delta},\cX)=\text{\rm{sign}} \cQ_{t_*,\cX}$, if $t_*\in (c,d)$,
		
		$($ii$)$ $\mi(\Upsilon_{0\leq t\leq \delta},\cX)=-n_{-}(\cQ_{t_*,\cX})$, if $t_*=c$,
		
		$($iii$)$ $\mi(\Upsilon_{1-\delta\leq t\leq 1}, \cX)=n_{+}(\cQ_{t_*,\cX})$, if $t_*=d$.
	\end{theorem}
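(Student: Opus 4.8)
The plan is to unwind the spectral-flow definition \eqref{dfnMInd} and reduce everything to a first-order analysis of the eigenvalues of the Souriau family $\upsilon(t)=S_\cX(\Upsilon(t))$ as they pass through the point $-1$. First I would localize. Since the crossing form $\cQ_{t_*,\cX}$ is non-degenerate, $t_*$ is an isolated crossing, so one may choose $\delta>0$ with $\Upsilon(t)\cap\cX=\{0\}$ for $0<|t-t_*|<\delta$; by \eqref{1.12.1} this means $-1\in\Sp(\upsilon(t))$ only at $t=t_*$ in this window. Shrinking $\delta$ further, the finitely many eigenvalues of $\upsilon(t)$ near $-1$ depend $C^1$ on $t$ and coalesce at $-1=\e^{\bfi\pi}$ precisely when $t=t_*$, with total multiplicity $\dim_\R(\cX\cap\Upsilon(t_*))=\dim V$, where $V:=\cX\cap\Upsilon(t_*)$ is the space on which $\cQ_{t_*,\cX}$ is defined.

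The core step is an infinitesimal dictionary between $\cQ_{t_*,\cX}$ and the phase velocities of these eigenvalues. Writing a perturbed eigenvalue as $\e^{\bfi\phi(t)}$ with unit eigenvector $\psi(t)$ and $\phi(t_*)=\pi$, a standard differentiation of $\upsilon(t)\psi(t)=\e^{\bfi\phi(t)}\psi(t)$, using unitarity of $\upsilon$, yields $\dot\phi(t_*)=\tfrac{1}{\bfi}\big(\upsilon(t_*)^{-1}\dot\upsilon(t_*)\psi,\psi\big)_\omega$; since $\upsilon(t_*)^{-1}\dot\upsilon(t_*)$ is skew-Hermitian, the right-hand side is the value of a Hermitian form $\cH(\psi):=\tfrac1{\bfi}(\upsilon(t_*)^{-1}\dot\upsilon(t_*)\psi,\psi)_\omega$ on $\ker(\upsilon(t_*)+I_{2m})$. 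I would then identify $\cH$, transported to $V$ via the identification underlying \eqref{1.12.1}, with the crossing form $\cQ_{t_*,\cX}(u,v)=\omega(u,\dot R_{t_*}v)$, up to a fixed positive factor. This is achieved by differentiating $S_\cX(\cY)=(I_{2m}-2P_\cY)(2P_\cX-I_{2m})$ along the graph $\Upsilon(t)=\{u+R_tu: u\in\Upsilon(t_*)\}$, giving $\dot\upsilon(t_*)=-2\dot P_{\Upsilon(t_*)}(2P_\cX-I_{2m})$, expressing $\dot P_{\Upsilon(t_*)}$ through $\dot R_{t_*}$ (recall $R_{t_*}=0$), and rewriting the outcome via the complex structure $\Omega$ and the complex inner product \eqref{1.12}; the Lagrangian property of $\cX$ and $\Upsilon(t_*)$ is exactly what makes $\cH$ real-valued on $V$ and equal to $\cQ_{t_*,\cX}$ up to scaling. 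In particular, $n_+(\cQ_{t_*,\cX})$ and $n_-(\cQ_{t_*,\cX})$ count the directions in $V$ along which the corresponding eigenvalue crosses $-1$ with $\dot\phi(t_*)>0$ and $\dot\phi(t_*)<0$, respectively.

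With this dictionary the three cases are pure bookkeeping against the convention in \eqref{dfnMInd}, where $k(t,\varepsilon)$ counts eigenvalues in the arc $\{\e^{\bfi(\pi+\alpha)}:0\leq\alpha\leq\varepsilon\}$. For an interior crossing $t_*\in(c,d)$, pick partition points $t_{j-1}<t_*<t_j$ in $(t_*-\delta,t_*+\delta)$: at $t_{j-1}$ only the $n_-$ negative-velocity eigenvalues (with $\phi>\pi$) lie in the arc, while at $t_j$ only the $n_+$ positive-velocity ones do, so $k(t_j,\varepsilon)-k(t_{j-1},\varepsilon)=n_+-n_-=\sign\cQ_{t_*,\cX}$, proving (i). For $t_*=c$ the eigenvalues sit at $\phi=\pi$ (the endpoint $\alpha=0$) and are all counted at $t=c$, whereas at $c+\delta$ only the $n_+$ positive-velocity ones remain in the arc, so $k(c+\delta,\varepsilon)-k(c,\varepsilon)=n_+-(n_++n_-)=-n_-(\cQ_{t_*,\cX})$, proving (ii); the symmetric count at $t_*=d$ gives $(n_++n_-)-n_-=n_+(\cQ_{t_*,\cX})$, proving (iii).

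The main obstacle is the identification in the second paragraph: converting the abstract derivative $\dot\upsilon(t_*)$ into the concrete crossing form $\omega(u,\dot R_{t_*}v)$ forces a careful computation with the projections $P_\cY,P_\cX$, the complex structure $\Omega$, and the inner product \eqref{1.12}, and pinning down the global sign (so that positive directions of $\cQ_{t_*,\cX}$ indeed correspond to counterclockwise passage through $-1$) is the delicate point on which the signs in (i)--(iii) hinge. This is precisely the framework of \cite[Corollary 3.25]{F04}, whose Souriau-map definition of the Maslov index we have adopted, so we may also simply invoke it.
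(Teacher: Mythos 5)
The paper gives no proof of this statement at all: it is imported verbatim as \cite[Corollary 3.25]{F04}, so the ``paper's proof'' is just the citation, which your final sentence also falls back on. Your sketch is a correct reconstruction of the cited argument: the localization via non-degeneracy of $\cQ_{t_*,\cX}$, the first-order perturbation formula $\dot\phi(t_*)=\tfrac{1}{\bfi}(\upsilon(t_*)^{-1}\dot\upsilon(t_*)\psi,\psi)_\omega$ for the eigenvalue phases of the Souriau family, and the endpoint bookkeeping against the half-open arc $\{\e^{\bfi(\pi+\alpha)}:0\leq\alpha\leq\varepsilon\}$ in \eqref{dfnMInd} all check out (in particular the counts $n_+-(n_++n_-)=-n_-$ at $t_*=c$ and $(n_++n_-)-n_-=n_+$ at $t_*=d$ are exactly right). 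The one step you do not carry out --- identifying the Hermitian form $\cH$ on $\ker(\upsilon(t_*)+I_{2m})$ with $\cQ_{t_*,\cX}(u,v)=\omega(u,\dot R_{t_*}v)$ up to a positive factor, with the correct global sign --- is genuinely the crux, and you correctly flag it and defer it to \cite{F04}; since that is precisely what the paper itself does for the entire theorem, there is no gap relative to the paper.
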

We will now review the Maslov index for {\it two} paths with values in $\Lambda_{\omega}(m)$, see \cite[Section 3.5]{F04}. Let us fix $\Upsilon_1,\Upsilon_2\in C([c,d], \Lambda_{\omega}(m))$, and introduce $\Delta:=\{(p,p):p\in \R^{2m}\}$, the diagonal plane.  On  $\R^{2m}\oplus\R^{2m}$ we define the symplectic form $\wti{\omega}:=\omega\oplus(-\omega)$ with the complex structure $\wti{\Omega}:=\Omega\oplus(-\Omega)$, denoting the resulting space of Lagrangian planes  by $\Lambda_{\wti{\omega}}(2m)$. We consider the path $\wti{\Upsilon}:=\Upsilon_1\oplus\Upsilon_2\in C ([c,d],\Lambda_{\wti{\omega}}(2m))$ and define the Maslov index of the two paths $\Upsilon_1,\Upsilon_2$ as $\mi(\Upsilon_1,\Upsilon_2):=\mi(\wti{\Upsilon},\Delta).$
If $\Upsilon_2(t)=\cX$ for all $t\in[c,d]$, then $ \mi(\Upsilon_1\oplus \Upsilon_2,\Delta)=\mi(\Upsilon_1,\cX).$
\section{Variation Of The Parameter $\theta$} \lb{SectionTheta}
In this section, for the counting function of the Schr\"odinger operator $H_{\theta}$ equipped with a $\theta-$periodic boundary conditions, we derive a relation between $N(r,\theta_2)-N(r,\theta_1)$ and the Maslov index of a path associated with the eigenvalue problem \eqref{1.6},\eqref{1.7}. In order to use the symplectic approach in counting eigenvalues, we rewrite equation \eqref{1.6} and the boundary conditions \eqref{1.7} in terms of the real and imaginary parts of $u$ and arrive at the $(2n\times2n)$ system
\begin{align}
&-y''+(V\otimes I_2)y=\lambda y,\ \lambda\in\R,\ y:[a,b]\rightarrow \R^{2n}\lb{2.1},\\
&\quad y(b)=(I_n\otimes M_{\theta})y(a),\ y'(b)=(I_n\otimes M_{\theta})y'(a),\lb{2.2}
\end{align} 
where the components of the vectors $y=(y_k)_{k=1}^{2n}$ in \eqref{2.1} and $u=(u_k)_{k=1}^{n}$ in \eqref{1.6} are related via  
\begin{equation}\lb{2.4}
y_{2k-1}=\Re u_k, y_{2k}=\Im u_k,\ 1\leq k\leq n.
\end{equation}
The number of the linearly independent in $L^2([a,b],\R^{2n})$ solutions to \eqref{2.1},\eqref{2.2} is equal to $2\dim_{\C}\ker(H_{\theta}-\lambda)$. Indeed if $u\in \ker(H_{\theta}-\lambda)$, then $y$ from \eqref{2.4} and $(-I_n\otimes J)y$ are solutions to \eqref{2.1},\eqref{2.2}.
Moreover, linearly independent solutions $u$ in $L^2([a,b],\C^{n})$ give linearly independent solutions $y$ and $(-I_n\otimes J)y$ in $L^2([a,b],\R^{2n})$. Conversely, as we already observed, solutions to \eqref{2.1},\eqref{2.2} appear in pairs, $y$ and $(-I_n\otimes J)y $, so that by mapping them into $u, \bfi u$, we obtain a linearly dependent in $L^2([a,b],\C^n)$ pair of solutions to \eqref{1.6},\eqref{1.7}. 

Next, we introduce a symplectic bilinear form on $\R^{8n}$, $\omega:\R^{8n}\times \R^{8n}\rightarrow \R,$ $\omega(p,q):=(p, (J\otimes I_{4n})q)_{\R^{8n}}$, and the planes $\cF_{\theta}^1$ and $\cF_{\lambda}^2$ associated with equation \eqref{2.1} and boundary conditions \eqref{2.2},
\begin{align}
&\cF^1_{\theta}:=\left\{(p,(I_{n}\otimes M_{\theta})\ p,-q,(I_{n}\otimes M_{\theta})\ q)^{\top}: p,q\in \R^{2n}\right\},\ \lb{2.5}\\
&\cF^2_{\lambda}:=\{\tr(y): y\text{\ are solutions to \eqref{2.1}}\}; \lb{2.6}
\end{align}
and the trace $\tr(y):=(y(a),y(b),-y'(a),y'(b))^{\top}$.
\begin{proposition}\lb{p2.1}
$($i$)$ For each $\theta\in[0,2\pi)$ one has $\cF^1_{\theta}\in \Lambda_{\omega}(4n)$.

$($ii$)$ For each $\lambda\in\R$ one has $\cF^2_{\lambda}\in \Lambda_{\omega}(4n)$.
\end{proposition}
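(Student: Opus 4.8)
The plan is to check, for each family separately, the two defining properties of a Lagrangian plane in $\Lambda_\omega(4n)$: that its real dimension equals $4n$ (half of $8n$), and that $\omega$ vanishes identically on it. The computational engine throughout is the explicit block action of the symplectic structure: writing a vector of $\R^{8n}$ in four blocks $(v_1,v_2,v_3,v_4)^\top$ with $v_i\in\R^{2n}$, the Kronecker convention gives
\begin{equation*}
(J\otimes I_{4n})(v_1,v_2,v_3,v_4)^\top=(v_3,v_4,-v_1,-v_2)^\top,
\end{equation*}
so that evaluating $\omega(v,w)=(v,(J\otimes I_{4n})w)_{\R^{8n}}$ splits into four $\R^{2n}$ inner products, and each assertion reduces to bookkeeping with these blocks.

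For part (i), the parameterization $(p,q)\mapsto(p,\cO_\theta p,-q,\cO_\theta q)^\top$ of $\cF^1_\theta$, where $\cO_\theta:=I_n\otimes M_\theta$, is linear and injective because its first and third blocks recover $p$ and $-q$; hence $\dim_\R\cF^1_\theta=4n$. For two elements $v,v'$ with parameters $(p,q)$ and $(p',q')$, the block formula yields
\begin{equation*}
\omega(v,v')=-(p,q')_{\R^{2n}}+(\cO_\theta p,\cO_\theta q')_{\R^{2n}}+(q,p')_{\R^{2n}}-(\cO_\theta q,\cO_\theta p')_{\R^{2n}}.
\end{equation*}
The key point is that $M_\theta$ is a rotation, so $M_\theta^\top M_\theta=I_2$ and hence $\cO_\theta^\top\cO_\theta=I_{2n}$; the two rotated inner products collapse to $(p,q')_{\R^{2n}}$ and $(q,p')_{\R^{2n}}$, and the four terms cancel in pairs, giving $\omega\equiv0$ on $\cF^1_\theta$.

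For part (ii), the solution space of the second-order system \eqref{2.1} is $4n$-dimensional, being parameterized by the Cauchy data $(y(a),y'(a))\in\R^{4n}$ via existence and uniqueness for linear ODE; since $\tr(y)$ records $(y(a),-y'(a))$ in its first and third blocks, the trace map is injective and $\dim_\R\cF^2_\lambda=4n$. For the vanishing of $\omega$ I would invoke the Lagrange (Wronskian) identity: for solutions $y,z$ of \eqref{2.1}, the block formula gives
\begin{equation*}
\omega(\tr(y),\tr(z))=\big[(y(b),z'(b))_{\R^{2n}}-(y'(b),z(b))_{\R^{2n}}\big]-\big[(y(a),z'(a))_{\R^{2n}}-(y'(a),z(a))_{\R^{2n}}\big],
\end{equation*}
which is precisely $W(b)-W(a)$ for $W(x):=(y(x),z'(x))_{\R^{2n}}-(y'(x),z(x))_{\R^{2n}}$. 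Differentiating gives $W'=(y,z'')_{\R^{2n}}-(y'',z)_{\R^{2n}}$, and substituting $y''=(V\otimes I_2)y-\lambda y$ and $z''=(V\otimes I_2)z-\lambda z$ cancels the $\lambda$-terms while the remaining terms cancel because $V\otimes I_2$ is symmetric; thus $W$ is constant, $W(b)-W(a)=0$, and $\omega$ vanishes on $\cF^2_\lambda$.

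The two dimension counts and the computation in (i) are routine, so I expect the only real subtlety to lie in (ii): matching the sign pattern built into $\tr(y)=(y(a),y(b),-y'(a),y'(b))^\top$ against the block action of $J\otimes I_{4n}$, so that $\omega(\tr(y),\tr(z))$ comes out exactly as the boundary increment of the Wronskian $W$. Once those signs align, the proof exposes the conceptual content, namely that it is precisely the self-adjointness of the problem, encoded in $V^\top=V$, that forces $\omega$ to vanish on the solution family.
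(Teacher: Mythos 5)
Your proposal is correct and follows essentially the same route as the paper: part (i) is the identical computation using $M_\theta^\top M_\theta=I_2$, and in part (ii) your ``Wronskian $W$ is constant'' argument is exactly the paper's integration by parts of $\int_a^b\big[(-y_1'',y_2)-(y_1,-y_2'')\big]\,dx$ combined with the symmetry of $V$, just packaged as $W'=0$ instead of a single definite integral. The only (harmless) addition is that you spell out the two dimension counts via injective parametrizations and Cauchy data, which the paper simply asserts.
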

\begin{proof}
({\it i}) Pick any two vectors $h_{\ell}\in \cF_{\theta}^1$, that is, for some $p_{\ell},q_{\ell}\in \R^{2n}$ let\newline $h_{\ell}=\left(p_{\ell}, (I_{n}\otimes M_{\theta}) p_{\ell},-q_{\ell}, (I_{n}\otimes M_{\theta}) \ q_{\ell}\right)^{\top},\ {\ell}=1,2.$
Then
\begin{align}
\omega(h_1,h_2)=&(p_1,- q_2)_{\R^{2n}}+\left((I_{n}\otimes M_{\theta})\ p_1,(I_{n}\otimes M_{\theta})\ q_2\right)_{\R^{2n}}\no\\
&+(-q_1,- p_2)_{\R^{2n}}+\left((I_{n}\otimes M_{\theta})\ q_1, (-I_{n}\otimes M_{\theta})\ p_2\right)_{\R^{2n}}\no\\
=&-(p_1, q_2)_{\R^{2n}}+\left(p_1,(I_{n}\otimes M_{\theta}^{\top})\ (I_{n}\otimes M_{\theta})\ q_2\right)_{\R^{2n}}\no\\
&+(q_1,p_2)_{\R^{2n}}-\left(\ q_1, {(I_{n}\otimes M_{\theta}^{\top})}\ (I_{n}\otimes M_{\theta})\ p_2\right)_{\R^{2n}}=0,\lb{2.8}
\end{align}
since $(I_{n}\otimes M_{\theta}^{\top})\ (I_{n}\otimes M_{\theta})=I_{2n}$. The fact that $\dim \cF_{\theta}^1=4n$ is apparent.

({\it ii}) Pick any two vectors $Y_{\ell}\in \cF_{\lambda}^2$, so that for some solutions $y_{\ell}$ of \eqref{2.1},\newline $Y_{\ell}=\left(y_{\ell}(a),y_{\ell}(b),-y'_{\ell}(a),y'_{\ell}(b)\right)^{\top},\ {\ell}=1,2.$ 
Then integration by parts yields
\begin{align*}
\omega(Y_1,Y_2)=&\int_a^b(-y_1'',y_2)_{\R^{2n}}-(y_1,-y_2'')_{\R^{2n}}dx\no\\
=&\int_a^b(\lambda y_1-(V\otimes I_2) y_1,y_2)_{\R^{2n}}-(y_1,\lambda y_2-(V\otimes I_2)y_2)_{\R^{2n}}dx=0.
\end{align*}
The equation $\omega(Y_1,Y_2)=0$ together with $\dim \cF_{\lambda}^2=4n$ prove the assertion. 
\end{proof}
In order to formulate our principal result we need to define two continuous and piecewise differentiable paths with values in $\Lambda_\omega(4n)$. For any fixed $\theta_1,\theta_2\in[0,2\pi)$ and $r\in \R$ larger than $\lambda_{\infty}$ from \eqref{1.5} we introduce a parametrization of the rectangle displayed in Figure 1(I) as follows.
Let  $\Gamma=\cup_{j=1}^4\Gamma_j$ be the boundary of the square $\{(\theta,\lambda): \theta\in[\theta_1,\theta_2], \lambda\in[\lambda_{\infty},r] \}$, let $\Sigma=\cup_{j=1}^4\Sigma_j$, and let $\Sigma\ni s\mapsto (\theta(s),\lambda(s))\in \Gamma$ be the parametrization of $\Gamma$ defined by
\begin{align}
&\lambda(s)=s,\, \theta(s)=\theta_1 ,\, s\in\Sigma_1:=[\lambda_{\infty},r],\lb{par2}\\
&\lambda(s)=r,\, \theta(s)=s+\theta_1-r ,\, s\in\Sigma_2:=[r,r+\theta_2-\theta_1 ],\lb{par3}\\
&\lambda(s)= -s+2r+\theta_2-\theta_1 ,\, \theta(s)= \theta_2,\no\\ &\hskip3cm s\in\Sigma_3:=[r+\theta_2-\theta_1 ,2r+\theta_2-\theta_1-\lambda_{\infty}],\lb{par4}\\
&\lambda(s)=\lambda_{\infty},\, \theta(s)=-s+2r+2\theta_2-\theta_1 -\lambda_{\infty},\lb{par5}\\
&\hskip3cm s\in\Sigma_4:=[2r+\theta_2-\theta_1-\lambda_{\infty},2r+ 2(\theta_2-\theta_1)-\lambda_{\infty}].\no
\end{align}

Let us recall the Lagrangian planes $\cF_{\theta}^1,\cF_{\lambda}^2$ from \eqref{2.5},\eqref{2.6}, and the eigenvalue counting function $N(r,\theta)$ from \eqref{1.4}.
\begin{theorem}\lb{t2.2}
	Let $V\in L^{\infty}([a,b],\R^{n\times n})$ and $\ V=V^{\top}$. If $0\leq \theta_1<\theta_2< 2\pi$ and $r>\lambda_{\infty}$, then
	\begin{equation}\lb{2.15}
		N(r,\theta_2)-N(r,\theta_1)={}^{1}/_{2} \mi(\cF_{\theta}^1|_{\theta_1\leq\theta\leq \theta_2},\cF_{r}^2).
	\end{equation}
\end{theorem}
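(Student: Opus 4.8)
The plan is to realize the right-hand side of \eqref{2.15} as one side of a closed loop in $\Lambda_{\wti\omega}(8n)$ and to exploit the vanishing of the Maslov index of a contractible loop. I would let $(\theta(s),\lambda(s))$ run over the boundary $\Gamma=\cup_{j=1}^4\Gamma_j$ of the rectangle as in \eqref{par2}--\eqref{par5}, form the piecewise-$C^1$ path $s\mapsto\cF^1_{\theta(s)}\oplus\cF^2_{\lambda(s)}$ in $\Lambda_{\wti\omega}(8n)$, and compute its Maslov index relative to the diagonal $\Delta$. Since $(\theta,\lambda)\mapsto\cF^1_\theta\oplus\cF^2_\lambda$ is continuous on the whole rectangle (its values are Lagrangian by Proposition \ref{p2.1}), the boundary loop is contractible, so homotopy invariance of the Maslov index forces $\sum_{j=1}^4\mi(\Gamma_j)=0$, where $\mi(\Gamma_j)$ denotes the Maslov index of the restriction to $\Sigma_j$ relative to $\Delta$. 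Along $\Sigma_2$ the plane $\cF^2_\lambda\equiv\cF^2_r$ is frozen, so the two-path reduction recorded at the end of Section 2 identifies $\mi(\Gamma_2)=\mi(\cF^1_\theta|_{\theta_1\le\theta\le\theta_2},\cF^2_r)$, precisely the quantity in \eqref{2.15}. It then remains to evaluate the three remaining sides.

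The bottom side is immediate: along $\Gamma_4$ the level $\lambda=\lambda_\infty$ lies below $\Sp(H_\theta)$ for every $\theta$ by \eqref{1.5}, so $\cF^1_\theta\cap\cF^2_{\lambda_\infty}=\{0\}$ throughout and $\mi(\Gamma_4)=0$ (should $\lambda_\infty$ be an eigenvalue for some such $\theta$, one replaces it by a strictly smaller lower bound, which alters neither side of \eqref{2.15}). On the two vertical sides $\Gamma_1$ (where $\theta\equiv\theta_1$) and $\Gamma_3$ (where $\theta\equiv\theta_2$) I would argue by monotonicity in $\lambda$. A crossing there occurs exactly at an eigenvalue $\lambda_*$ of $H_{\theta_i}$, and by the correspondence around \eqref{2.4} the intersection has real dimension $\dim_\R(\cF^1_{\theta_i}\cap\cF^2_{\lambda_*})=2\dim_\C\ker(H_{\theta_i}-\lambda_*)$; this doubling is exactly what produces the factor $1/2$ in \eqref{2.15}.

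The heart of the argument is the crossing form of Definition \ref{def21}. Choosing a smooth family $y_\lambda$ of solutions of \eqref{2.1} with $y_{\lambda_*}$ an eigenfunction and writing $\dot y=\partial_\lambda y_\lambda|_{\lambda_*}$, the curve $\tr(y_\lambda)$ lies in $\cF^2_\lambda$, so for $\xi=\tr(y_{\lambda_*})\in\cF^1_{\theta_i}\cap\cF^2_{\lambda_*}$ the crossing form of $\cF^2_\lambda$ relative to the frozen $\cF^1_{\theta_i}$ is $\omega(\xi,\tr(\dot y))$. Inserting $-y_{\lambda_*}''=(\lambda_*-V\otimes I_2)y_{\lambda_*}$ and $-\dot y''=y_{\lambda_*}+(\lambda_*-V\otimes I_2)\dot y$ into the Green identity from the proof of Proposition \ref{p2.1} and using $V^\top=V$, I would obtain
\[
\cQ_{\lambda_*,\cF^1_{\theta_i}}(\xi,\xi)=\int_a^b\big[(-y_{\lambda_*}'',\dot y)_{\R^{2n}}-(y_{\lambda_*},-\dot y'')_{\R^{2n}}\big]\,dx=-\int_a^b|y_{\lambda_*}|^2\,dx<0,
\]
so this form is negative definite and every crossing is automatically regular. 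Passing to the diagonal picture, where $\cF^2_\lambda$ sits in the $(-\omega)$-summand of $\wti\omega=\omega\oplus(-\omega)$ while $\cF^1_{\theta_i}$ is frozen, the governing form is the negative of the above, namely $+\int_a^b|y_{\lambda_*}|^2$. Hence along $\Gamma_1$ (increasing $\lambda$) every interior crossing contributes $+2\dim_\C\ker(H_{\theta_1}-\lambda_*)$, whereas along $\Gamma_3$ the factor $d\lambda/ds=-1$ flips the sign. Assuming momentarily that $r\notin\Sp(H_{\theta_1})\cup\Sp(H_{\theta_2})$ so that all crossings are interior, Theorem \ref{masform}(i) then yields $\mi(\Gamma_1)=2N(r,\theta_1)$ and $\mi(\Gamma_3)=-2N(r,\theta_2)$.

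Substituting into $\sum_{j=1}^4\mi(\Gamma_j)=0$ gives $2N(r,\theta_1)+\mi(\Gamma_2)-2N(r,\theta_2)=0$, that is $\mi(\Gamma_2)=2\big(N(r,\theta_2)-N(r,\theta_1)\big)$, which is \eqref{2.15} after division by two. I expect the principal obstacle to lie in the crossing-form bookkeeping rather than in the topology: one must reconcile the intrinsic negative definiteness of the $\cF^2$-crossing form with the sign contributed by the $(-\omega)$-summand, carry the multiplicity $2\dim_\C\ker$ through consistently, and dispose of the corner crossings at $(\theta_i,r)$ when $r\in\Sp(H_{\theta_1})\cup\Sp(H_{\theta_2})$. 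The cleanest route past the last point is to establish \eqref{2.15} first for $r$ outside both spectra and then recover the general case from the monotonicity of $N(\cdot,\theta)$ and the stability of $\mi$ under small perturbations of $r$.
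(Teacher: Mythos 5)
Your proposal follows the paper's proof essentially verbatim: the same rectangle and parametrization, the same closed-loop argument giving $\sum_{j}\mi(\Gamma_j)=0$, the same crossing-form computation $\cQ=-\|y\|^2_{L^2}$ on the vertical sides, and the same sign bookkeeping yielding $\mi(\Gamma_1)=2N(r,\theta_1)$, $\mi(\Gamma_3)=-2N(r,\theta_2)$, $\mi(\Gamma_4)=0$. The only minor divergence is the corner case $r\in\Sp(H_{\theta_i})$, which the paper handles directly through the endpoint clauses of Theorem \ref{masform} (the contribution $n_+(\cQ_{r,\cF^1_{\theta_i}})$ vanishes since the form is negative definite), rather than by your sketchier perturbation of $r$.
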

\begin{proof} Given parametrization \eqref{par2}-\eqref{par5}, we introduce the paths $\Upsilon_1(s):=\cF_{\theta(s)}^1$, $\Upsilon_2(s):=\cF_{\lambda(s)}^2$, and their direct sum $\wti{\Upsilon}(s):=\Upsilon_1(s)\oplus\Upsilon_2(s),\ s\in \Sigma$, taking values in $\Lambda_{\tilde{\omega}}(8n)$ for $\wti{\omega}:=\omega\oplus(-\omega)$. Since $\wti{\Upsilon}$ is a closed loop, we have $\mi (\wti{\Upsilon}(s),\Delta)=0$, where $\Delta:=\{(p,p):p\in \R^{8n}\}$. On the other hand, 
	\begin{align}
		\mi(\wti{\Upsilon}(s),\Delta)=&\mi(\wti{\Upsilon}(s)|_{\Sigma_1},\Delta)+\mi(\wti{\Upsilon}(s)|_{\Sigma_2},\Delta)\no\\
		&+\mi(\wti{\Upsilon}(s)|_{\Sigma_3},\Delta)+\mi(\wti{\Upsilon}(s)|_{\Sigma_4},\Delta)\lb{2.16}.
	\end{align}
We will compute each term individually and use  \eqref{2.16} to obtain formula \eqref{2.15}.

{\bf Step 1.} Since $\theta(s)=\theta_1$ for all $s\in\Sigma_1$, one has $\wti{\Upsilon}=\Upsilon_1(s)\oplus\Upsilon_2(s)=\cF_{\theta_1}^1\oplus \Upsilon_2(s)$, thus $\mi(\wti{\Upsilon}|_{\Sigma_1},\Delta)=-\mi(\Upsilon_2(s),\cF_{\theta_1}^1)$.

Let $s_*\in (\lambda_{\infty}, r)$ be a conjugate point, i.e. $\Upsilon_2(s_*)\cap \cF_{\theta_1}^1\not=\{0\}$. There exists a small neighbourhood $\Sigma_{s_*}\subset(\lambda_{\infty}, r)$ of $s_*$ and a family \[(s+s_*)\mapsto R_{(s+s_*)}\,\text{ in }\, C^1\big(\Sigma_{s_*}, \cL(\Upsilon_2(s_*), \Upsilon_2(s_*)^{\perp})\big),\ R_{s_*}=0_{8n},\] such that $\Upsilon_2(s)=\{u+R_{(s+s_*)}u\big| u\in \Upsilon_2(s_*) \}$ for all $(s+s_*)\in \Sigma_{s_*}$ (cf., the discussion prior Definition \ref{def21}). Let us fix a solution $y^0$ to \eqref{2.1},\eqref{2.2} with $\lambda=\lambda(s_*)$ and $\theta=\theta_1$ (this solution exists since $s_*$ is a conjugate point). Then $\tr(y^0)+R_{(s+s_*)}\tr(y^0)\in\Upsilon_2(s)=\cF^2_{\lambda(s)}$ for small $|s|$, and thus there exists a family of solutions $y_s^0$ of \eqref{2.1} such that $\tr(y^0_s):=\tr(y^0)+R_{(s+s_*)}\tr(y^0)$. Next we calculate the crossing form using integration by parts and that $y_s^0$ solves \eqref{2.1} with $\lambda=\lambda(s_*+s)$:
\begin{align}
\omega&(\tr(y^0), R_{(s+s_*)}\tr(y^0))\no\\
&=\int_a^b(-{y^0}'',y_s^0-y^0)_{\R^{2n}}-(y^0,-({y}^0_s-y^0)'')_{\R^{2n}}dx\no\\
&=\int_a^b(y^0,{y^0_s}''-(V\otimes I_2)y^0_s+\lambda(s_*)y_s^0)_{\R^{2n}}dx\no\\
&=(\lambda(s_*)-\lambda(s+s_*))\int_a^b(y^0,y_s^0)_{\R^{2n}}dx=-s\int_a^b(y^0,y_s^0)_{\R^{2n}}dx.\lb{2.17}
\end{align}
Differentiating with respect to $s$ at $s=0$ yields
\begin{align}
\cQ_{s_*,\cF_{\theta_1}^1 }(\tr(y^0),\tr(y^0))&:=\frac{d}{ds}\omega(\tr(y^0), R_{(s+s_*)}\tr(y^0))\big|_{s=0}=-\|y^0\|^2_{L^2([a,b],\R^{2n})}.\no
\end{align}
By Theorem \ref{masform} ({\it i}) we therefore have 
\begin{align}
&\mi\big(\Upsilon_{2}\big|_{\Sigma_{s_*}},\cF_{\theta_1}^1\big)=\sign\ \cQ_{s_*,\cF_{\theta_1}^1}=-\dim_{\R}\big(\Upsilon_2(s_*)\cap \cF_{\theta_1}^1\big)\lb{2.18}\\
&=-\#\left\{\begin{matrix}
\text{\  linearly independent in $L^2([a,b],\R^{2n})$\ }\\
\text{\  solutions to \eqref{2.1},\eqref{2.2}\ }\\
\text{with $\lambda=\lambda(s_*)$ and $\theta=\theta_1$}
\end{matrix}\right\}=-2\dim_{\C}\ker(H_{\theta_1}-\lambda(s_*)).\no
\end{align}
Formula \eqref{2.18} holds for all crossings $s_*\in \Sigma_1$, thus, using \eqref{par2},
\begin{align}
\mi\big(\Upsilon_{2}\big|_{\Sigma_1}, \cF_{\theta_1}^1 \big)&=\sum\nolimits_{\substack{\lambda_{\infty}<s<r:\\
		\Upsilon_2(s)\cap \cF_{\theta_1}^1\not=\{0\}	}}\sign\ \cQ_{s,\cF_{\theta_1}^1}+n_+\left(\cQ_{r,\cF_{\theta_1}^1}\right)\no\\
&=-\sum\nolimits_{\lambda_{\infty}<s<r}2\dim_{\C}\ker(H_{\theta_1}-\lambda(s))=-2N(r,\theta_1),\lb{2.12}
\end{align}
where we used \eqref{2.18} and that $\cQ_{r,\cF_{\theta_1}^1}$ is negative definite, that is,  $n_+(\cQ_{r,\cF_{\theta_1}^1 })=0$, where the last equality holds due to \eqref{1.5}.
Finally,  
	$\mi(\wti{\Upsilon}|_{\Sigma_1},\Delta)=2N(r,\theta_1)$.

{\bf Step 2.} Calculations similar to the ones  in Step 1 lead to 
$\mi(\wti{\Upsilon}|_{\Sigma_3},\Delta)=-2N(r,\theta_2)$.
Indeed, the key fact that was used in Step 1 and can be employed here is that $\theta(s)$ is a constant and $|\lambda(s+s_*)-\lambda(s_*)|=|s|$ for all $s\in\Sigma_1\cup\Sigma_3$.

{\bf Step 3.} Since $\Sp( H_{\theta})\subset (\lambda_{\infty},+\infty)$ by \eqref{1.5}, 
one has
$\mi(\wti{\Upsilon}|_{\Sigma_4},\Delta)=0$.

{\bf Step 4.} Combining this, \eqref{2.16}, 
$\mi(\wti{\Upsilon}|_{\Sigma_2},\Delta)=\mi(\cF_{\theta}^1|_{\theta_1\leq\theta\leq \theta_2},\cF_{r}^2)$ and $\mi(\wti{\Upsilon},\Delta)=0$, one obtains \eqref{2.15}. 
\end{proof}
Let us define the counting function for an interval, 
\begin{equation}
	N([r_1,r_2),\theta):=\sum\nolimits_{r_1\leq \lambda<r_2}\dim _{\C}\ker (H_\theta-\lambda),\ r_1<r_2,\ \theta\in[0,2\pi).\no
\end{equation}
\begin{corollary}
Under the assumptions of Theorem \ref{t2.2} one has
	\begin{align}\lb{3.40}
	N([r_1,& r_2),\theta_2)-N([r_1,r_2), \theta_1)\no\\
	&={}^{1}/_{2} \mi(\cF_{\theta}^1|_{\theta_1\leq\theta\leq \theta_2},\cF_{r_2}^2)-{}^{1}/_{2} \mi(\cF_{\theta}^1|_{\theta_1\leq\theta\leq \theta_2},\cF_{r_1}^2),\ r_1<r_2.
	\end{align}
\end{corollary}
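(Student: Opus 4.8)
The plan is to reduce the statement to two applications of Theorem \ref{t2.2} by exploiting the additivity of the eigenvalue counting function. The first step is to observe that, directly from the definitions of $N(\cdot,\theta)$ in \eqref{1.4} and of $N([r_1,r_2),\theta)$, the eigenvalues of $H_\theta$ lying below $r_2$ split into those lying below $r_1$ and those lying in $[r_1,r_2)$, so that
\begin{equation}\no
N([r_1,r_2),\theta)=N(r_2,\theta)-N(r_1,\theta).
\end{equation}
Here I would note that the hypotheses of Theorem \ref{t2.2} must hold at \emph{both} endpoints, i.e.\ $\lambda_\infty<r_1<r_2$, so that the theorem may legitimately be invoked for each value of the spectral parameter.

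The second step is to evaluate the left-hand side of \eqref{3.40} using this identity at the two values $\theta_1$ and $\theta_2$, and then to regroup the resulting four terms according to the spectral parameter rather than according to $\theta$:
\begin{align}\no
N([r_1,r_2),\theta_2)-N([r_1,r_2),\theta_1)
&=\big[N(r_2,\theta_2)-N(r_2,\theta_1)\big]\\\no
&\quad-\big[N(r_1,\theta_2)-N(r_1,\theta_1)\big].
\end{align}
Each bracketed difference is now exactly of the form computed in Theorem \ref{t2.2}.

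The final step is to apply Theorem \ref{t2.2} separately, once with $r=r_2$ and once with $r=r_1$, converting each bracketed difference into one-half of the corresponding Maslov index $\mi(\cF_{\theta}^1|_{\theta_1\leq\theta\leq\theta_2},\cF_{r_2}^2)$ and $\mi(\cF_{\theta}^1|_{\theta_1\leq\theta\leq\theta_2},\cF_{r_1}^2)$, respectively. Subtracting yields the asserted formula \eqref{3.40}.

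Since every step is an immediate consequence either of the definition of the counting functions or of Theorem \ref{t2.2}, I do not anticipate any genuine obstacle; the only point requiring attention is the standing assumption $\lambda_\infty<r_1$, which is needed so that Theorem \ref{t2.2} is applicable at the lower endpoint $r_1$ and not merely at $r_2$.
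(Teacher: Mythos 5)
Your proof is correct, but it takes a genuinely different route from the paper's. The paper establishes \eqref{3.40} by rerunning the loop argument of Theorem \ref{t2.2} on the smaller rectangle $[\theta_1,\theta_2]\times[r_1,r_2]$, i.e.\ with $\lambda_\infty$ replaced by $r_1$ and $r$ by $r_2$: the bottom edge $\Sigma_4$, which contributed nothing in the theorem because there are no crossings at level $\lambda_\infty$, now contributes the term $-\mi(\cF_{\theta}^1|_{\theta_1\leq\theta\leq \theta_2},\cF_{r_1}^2)$, while the vertical edges count the eigenvalues in $[r_1,r_2)$, the crossing at the endpoint $\lambda=r_1$ being picked up through the $-n_-$ endpoint convention of Theorem \ref{masform} since the crossing form there is negative definite. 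You instead reduce everything to bookkeeping: the additivity $N([r_1,r_2),\theta)=N(r_2,\theta)-N(r_1,\theta)$ followed by two applications of the already-proven Theorem \ref{t2.2}, at $r=r_2$ and at $r=r_1$. Your argument is shorter and avoids re-examining the crossing forms on the new bottom edge and at the corners; its only cost is that Theorem \ref{t2.2} must be applicable at the lower endpoint as well, i.e.\ $r_1>\lambda_\infty$, which you correctly flag (and which is harmless: if $r_1\leq\lambda_\infty$ then $N(r_1,\cdot)=0$ and the path at level $r_1$ has no crossings, so both subtracted quantities vanish and \eqref{3.40} reduces to \eqref{2.15}). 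Both proofs are valid; yours is the more economical deduction, the paper's keeps the geometric picture of a single closed loop in view.
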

\begin{proof}
	The proof is analogous to that of Theorem \ref{t2.2} with $\lambda_{\infty}:=r_1$ and $r:=r_2$.  	
\end{proof}
\begin{corollary}\lb{cor3.4}
If $V\in L^{\infty}([a,b],\R^{n\times n})$ then, for all $r$ and $r_1<r_2$, 

$(${\it i}$)$ if $0\leq\theta_1<\theta_2<2\pi$ then
\begin{align}\lb{2.25}
|N(r,\theta_2)-N(r,\theta_1)|&\leq 2n,
\\
\lb{3.41}
|N([r_1, r_2),\theta_2)-N([r_1,r_2), \theta_1)|&\leq 4n,
\end{align}

$(${\it ii}$)$ if $0<\theta_1<\theta_2<\pi$ or  $\pi<\theta_1<\theta_2<2\pi$ then
\begin{align}
|N(r,\theta_2)-N(r,\theta_1)|&\leq n,\lb{2.24}\\
|N([r_1, r_2),\theta_2)-N([r_1,r_2), \theta_1)|&\leq 2n.\lb{3.42}
\end{align}

\end{corollary}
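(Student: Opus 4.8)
The plan is to feed Theorem~\ref{t2.2} into a direct estimate of the Maslov index and then to reinterpret the crossings as unimodular eigenvalues of the monodromy matrix at energy $r$. By Theorem~\ref{t2.2} we have $N(r,\theta_2)-N(r,\theta_1)=\tfrac12\,\mi(\cF^1_\theta|_{\theta_1\le\theta\le\theta_2},\cF^2_r)$ whenever $r>\lambda_\infty$; when $r\le\lambda_\infty$ both counting functions vanish by \eqref{1.5} and there is nothing to prove. Since the Maslov index is, by \eqref{dfnMInd}, a signed count of eigenvalues of the Souriau unitary passing through $-1$, the net contribution of each crossing to the index is at most, in absolute value, the number of eigenvalues sitting at $-1$ there, which by \eqref{1.12.1} is the intersection dimension. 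Hence
\[
\big|\mi(\cF^1_\theta|_{\theta_1\le\theta\le\theta_2},\cF^2_r)\big|\le\sum_{\theta_*}\dim_\R\big(\cF^1_{\theta_*}\cap\cF^2_r\big),
\]
the sum running over the $\theta_*\in[\theta_1,\theta_2]$ with $\cF^1_{\theta_*}\cap\cF^2_r\ne\{0\}$. Using the identity $\dim_\R(\cF^1_{\theta}\cap\cF^2_r)=2\dim_\C\ker(H_\theta-r)$ recorded before Proposition~\ref{p2.1}, this gives $|N(r,\theta_2)-N(r,\theta_1)|\le\sum_{\theta_*}\dim_\C\ker(H_{\theta_*}-r)$.

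Next I would identify the crossings with eigenvalues of the monodromy matrix. Let $\cM(r)$ be the real $2n\times2n$ symplectic matrix propagating $(u(a),u'(a))^\top$ to $(u(b),u'(b))^\top$ along solutions of $-u''+Vu=ru$; it is real because $V$ and $r$ are real, and symplectic because the Wronskian pairing is preserved. A nonzero $u$ satisfies \eqref{1.6},\eqref{1.7} with $\lambda=r$ exactly when $(u(a),u'(a))^\top$ is an eigenvector of $\cM(r)$ with eigenvalue $e^{\bfi\theta_*}$, so $r\in\Sp(H_{\theta_*})$ iff $e^{\bfi\theta_*}\in\Sp(\cM(r))$, and $\dim_\C\ker(H_{\theta_*}-r)=\dim_\C\ker(\cM(r)-e^{\bfi\theta_*}I_{2n})$. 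Therefore
\[
\sum_{\theta_*}\dim_\C\ker(H_{\theta_*}-r)=\sum_{\theta_*\in[\theta_1,\theta_2]}\dim_\C\ker\big(\cM(r)-e^{\bfi\theta_*}I_{2n}\big),
\]
which is precisely the total geometric multiplicity of the unimodular eigenvalues of $\cM(r)$ whose argument lies in $[\theta_1,\theta_2]$. In particular the crossing set is finite, which is what legitimizes the estimate of the previous paragraph.

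The last step is to bound this multiplicity by the symmetries of a real symplectic matrix. The spectrum of $\cM(r)$ is invariant under $\mu\mapsto\bar\mu$ (reality) and under $\mu\mapsto\mu^{-1}$ (symplecticity), and on the unit circle these two operations coincide. Hence conjugation puts the unimodular eigenvalues with argument in $(0,\pi)$ into multiplicity-preserving bijection with those in $(\pi,2\pi)$; since the total algebraic multiplicity is $2n$, each of these two open arcs carries total multiplicity at most $n$, while all unimodular eigenvalues together carry multiplicity at most $2n$. Combining with the chain of inequalities above yields $|N(r,\theta_2)-N(r,\theta_1)|\le 2n$ for $[\theta_1,\theta_2]\subset[0,2\pi)$, which is \eqref{2.25}, and $|N(r,\theta_2)-N(r,\theta_1)|\le n$ when $[\theta_1,\theta_2]\subset(0,\pi)$ or $[\theta_1,\theta_2]\subset(\pi,2\pi)$, which is \eqref{2.24}. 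The interval estimates \eqref{3.41} and \eqref{3.42} follow immediately by writing the left-hand side of \eqref{3.40} as $[N(r_2,\theta_2)-N(r_2,\theta_1)]-[N(r_1,\theta_2)-N(r_1,\theta_1)]$ and applying \eqref{2.25}, respectively \eqref{2.24}, to each bracket.

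The step I expect to require the most care is the very first displayed inequality, namely controlling $|\mi|$ by the summed intersection dimensions even at possibly non-regular crossings. This is where I would be most explicit: once the crossing set is recognized as the finite set of arguments of unimodular eigenvalues of the fixed matrix $\cM(r)$, the bound follows from \eqref{dfnMInd}, since the spectral flow through $-1$ across any single crossing alters $k(\cdot,\varepsilon)$ by at most $\dim_\C\ker(S_\cX(\Upsilon)+I_{2m})$, which is the intersection dimension by \eqref{1.12.1}. The remaining bookkeeping — the factor of $2$ between $\dim_\R(\cF^1_\theta\cap\cF^2_r)$ and $\dim_\C\ker(H_\theta-r)$, and the passage from algebraic to geometric multiplicities in the eigenvalue pairing — is routine.
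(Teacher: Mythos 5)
Your proposal is correct, and for the heart of the matter --- part ({\it ii}) --- it takes a genuinely different route from the paper. Both proofs begin the same way, by reducing \eqref{2.25} and \eqref{2.24} to the bound $|N(r,\theta_2)-N(r,\theta_1)|\leq\sum_{\theta_1\leq s\leq\theta_2}\dim_{\C}\ker(H_s-r)$ and then estimating this sum; but where the paper proves $\sum_{0<\theta_1\leq s\leq\theta_2<\pi}\dim_{\C}\ker(H_s-r)\leq n$ by an explicit inductive (Vandermonde-type) construction showing that eigenfunctions $u_1,\dots,u_n$ at angles $\vartheta_k\in(0,\pi)$ together with their conjugates $\overline{u_1},\dots,\overline{u_n}$ form a fundamental system for \eqref{1.6}, you instead pass to the real symplectic monodromy matrix $\cM(r)$ and observe that $\dim_{\C}\ker(H_\theta-r)=\dim_{\C}\ker(\cM(r)-e^{\bfi\theta}I_{2n})$, so that the sum in question is the total geometric multiplicity of unimodular eigenvalues with argument in $[\theta_1,\theta_2]$; reality of $\cM(r)$ then pairs the arcs $(0,\pi)$ and $(\pi,2\pi)$ and yields the bound $n$ at once. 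The two arguments are morally equivalent (the paper's induction is, in effect, a by-hand proof that eigenvectors of $\cM(r)$ for the listed eigenvalues are independent), but your packaging is shorter and buys two extra things: it makes the finiteness of the crossing set explicit, which is exactly what justifies bounding $|\mi|$ by summed intersection dimensions directly from \eqref{dfnMInd} and \eqref{1.12.1} without assuming the crossings are regular (the paper's estimate \eqref{2.26} is written as a sum of signatures of crossing forms, which tacitly presumes regularity), and it handles the trivial case $r\leq\lambda_\infty$ that the corollary's ``for all $r$'' requires but Theorem \ref{t2.2} excludes. What the paper's version buys in exchange is a concrete fundamental system $\{u_k,\overline{u_k}\}$ that it reuses immediately afterwards (in \eqref{3.37}--\eqref{3.39}) to show that any further eigenfunction at an angle in $(0,\pi)$ lies in ${\rm span}\{u_1,\dots,u_n\}$. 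Your derivation of the interval bounds \eqref{3.41}, \eqref{3.42} by writing $N([r_1,r_2),\theta)=N(r_2,\theta)-N(r_1,\theta)$ and applying the pointwise bounds twice is correct and matches what the paper intends.
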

\begin{proof}
First we notice that 
\begin{align}
&|{}^{1}/_{2}\mi(\cF_{\theta}^1|_{\theta_1\leq\theta\leq \theta_2},\cF_{r}^2)|\no\\
&={}^{1}/_{2}\Big|-n_-(\cQ_{\theta_1,\cF_{r}^2})+\sum\nolimits_{\substack{\theta_1<s<\theta_2:\\
		\Upsilon_2(s)\cap \cF_{\theta_1}^1\not=\{0\}	}}\sign\ \cQ_{s,\cF_{r}^2}+n_+(\cQ_{\theta_2,\cF_{r}^2})\Big|\no\\
&\leq {}^{1}/_{2}\sum\limits_{\substack{\theta_1\leq s\leq \theta_2}}\#\left\{\begin{matrix}
\text{\ linearly independent in $L^2([a,b],\R^{2n})$\ }\\
\text{\  solutions to \eqref{2.1},\eqref{2.2}\ }\\
\text{with $\lambda=r$ and $\theta=s$}\end{matrix}\right\}\no\\&=\sum\nolimits_{{\theta_1\leq s\leq\theta_2}}\dim_{\C}\ker(H_{s}-r).\lb{2.26}
\end{align}
There exist $2n$ linearly independent in $L^2([a,b],\C^n)$ solutions to \eqref{1.6}, thus
\begin{equation}\lb{2.27}
\sum\nolimits_{{\theta_1\leq s\leq\theta_2}}\dim_{\C}\ker(H_{s}-r)\leq 2n,
\end{equation}
implying \eqref{2.25}, which in turn leads to \eqref{3.41}.
In order to show ({\it ii}) we prove that 
\begin{equation}\lb{2.28}
\sum\nolimits_{{0<\theta_1\leq s\leq\theta_2<\pi}}\dim_{\C}\ker(H_{s}-r)\leq n.
\end{equation}
Let us assume that we can fix $n$ values of the parameter $\theta$, enumerated in nondecreasing order $0<\theta_1\leq \vartheta_1\leq\dots\leq \vartheta_n \leq\theta_2<\pi$ and $n$ linearly independent in $L^2([a,b],\C^n)$ functions $u_k$, so that $u_k\in\ker(H_{\vartheta_k}-r)$, that is  $u_k$ is a solution to \eqref{1.6},\eqref{1.7} with $\lambda=r$ and $\theta=\vartheta_k$. If this assumption is not satisfied then \eqref{2.28} holds automatically. We note that $\overline{u_k}\in\ker(H_{-\vartheta_k}-r), 1\leq k \leq n$. 

We claim that the system of vectors $u_1,\overline{u_1}\dots, u_n,\overline{u_n}$ is linearly independent in $L^2([a,b],\C^n)$. Indeed, pick $\alpha_1,\dots, \alpha_n, \beta_1,\dots, \beta_n$ such that 
\begin{equation}\lb{2.29}
U_0(x):=\sum\nolimits_{j=1}^{n}\alpha_{j} u_{j}(x)+\sum\nolimits_{k=1}^{n}\beta_{k} \overline{u_k(x)}=0,\ x\in[a,b].
\end{equation}
For $1\leq l \leq n$ we define 
\begin{equation}
	U_l:=\sum\nolimits_{j=1}^{n}\prod\nolimits_{i=1}^{l}(e^{-\bfi\vartheta_i}-e^{\bfi\vartheta_j})\alpha_{j} u_{j}+\sum\nolimits_{k=1}^{n}\prod\nolimits_{i=1}^{l}(e^{-\bfi\vartheta_i}-e^{-\bfi\vartheta_k})\beta_{k} \overline{u_k}. \lb{d1}
\end{equation}
Our immediate objective is to prove that $U_l=0$ for each $l$. We use induction in $l$. 
For the base case $l=1$, evaluating $U_0(a), U_0(b)$ from \eqref{2.29} and using the boundary conditions satisfied by $u_k,\overline{u}_k$, we arrive at
\begin{equation}\lb{2.29.1}
U_0(a)=\sum\nolimits_{j=1}^{n}\alpha_{j} u_j(a)+\sum\nolimits_{k=1}^{n}\beta_{k} \overline{u_k(a)}=0,
\end{equation}
\begin{equation}\lb{2.29.2}
U_0(b)=\sum\nolimits_{j=1}^{n}\alpha_{j} e^{\bfi \vartheta_j}u_{j}(a)+\sum\nolimits_{k=1}^{n}\beta_{k} e^{-\bfi \vartheta_k}\overline{u_k(a)}=0.
\end{equation}
Multiplying \eqref{2.29.1} by $e^{-\bfi \vartheta_1}$ and subtracting \eqref{2.29.2} yields $U_1(a)=0$. Similarly one obtains  $U'_1(a)=0$, and since $U_1$ is a solution to the second order differential equation \eqref{1.6}, it is identically equal to zero. For the inductive step, we assume that $U_l\equiv 0$, then $e^{-\bfi \vartheta_{l+1}}U_l(a)-U_l(b)=0$ and $e^{-\bfi \vartheta_{l+1}}U'_l(a)-U'_l(b)=0$, and using the boundary conditions satisfied by $u_k,\overline{u}_k$ we conclude that $U_{l+1}(a)=U'_{l+1}(a)=0$. Since, in addition, $U_{l+1}$ solves the ODE it must vanish on $[a,b]$, completing the proof of $U_l=0$ for all $1\leq l \leq n$. The second term in \eqref{d1} vanishes if $l=n$, so\begin{equation}\lb{d2}
	U_n=\sum\nolimits_{j=1}^{n}\prod\nolimits_{i=1}^{n}(e^{-\bfi\vartheta_i}-e^{\bfi\vartheta_j})\alpha_{j} u_{j}.
\end{equation}
Since the system $u_1,\dots,u_n$ is linearly independent, $\alpha_j=0$. Using this in \eqref{2.29} similarly yields $\beta_k=0$, proving the claim.

 Next, to complete the proof, that is, to confirm \eqref{2.28} and therefore \eqref{2.24} and \eqref{3.42}, we show that if $u\in\ker(H_{{\theta}}-r)$ for some ${\theta}\in (0,\pi)$, then ${u}\in {\rm span}\{u_1,\dots, u_n\}$. As we just showed, $\{u_1,u_2,\dots,u_n, \overline{u_1}, \dots\overline{u_n}\}$ is the fundamental system of solutions to \eqref{1.6}. Then for some $\mu_1,\dots\mu_n,\nu_1,\dots,\nu_n$, one has
\begin{equation}\lb{3.37}
u=\sum\nolimits_{k=1}^{n}\mu_ku_k+\nu_k\overline{u_k}.
\end{equation}
Using ${u}(b)=e^{\bfi {\theta}}{u}(a),{u}'(b)=e^{\bfi {\theta}}{u}'(b),$ we obtain
\begin{equation}\lb{3.38}
\sum\nolimits_{k=1}^{n} (e^{\bfi {\theta}}-e^{\bfi \vartheta_k}) \mu_ku_k(a)+(e^{\bfi {\theta}}-e^{-\bfi \vartheta_k}) \nu_k\overline{u_k(a)}=0,
\end{equation}
\begin{equation}\lb{3.39}
\sum\nolimits_{k=1}^{n} (e^{\bfi {\theta}}-e^{\bfi \vartheta_k}) \mu_ku'_k(a)+(e^{\bfi {\theta}}-e^{-\bfi \vartheta_k}) \nu_k\overline{u'_k(a)}=0, 
\end{equation}
thus $(e^{\bfi {\theta}}-e^{\bfi \vartheta_k}) \mu_k=0,(e^{\bfi {\theta}}-e^{-\bfi \vartheta_k}) \nu_k=0,\ 1\leq k\leq n.$ Recalling that $\vartheta_1,\cdots, \vartheta_n,  \theta\in (0,\pi)$, one infers $\nu_1=\dots=\nu_n=0$ and ${u}\in {\rm span}\{u_1,\dots, u_n\}$ as asserted.
\end{proof}
In the concluding part of this section we discuss the monotonicity properties of the curves of {\it non-degenerate} or {\it simple} eigenvalues of $H_{\theta}$. Any such curve $\lambda(\theta)$ is analytic in $(\theta_1,\theta_2)\subset (0,\pi)\cup(\pi,2\pi)$, since the operator $\left(-\partial_x^2\right)_{\theta}$ is (and hence so is $H_{\theta}$), see, e.g., \cite[XIII.16]{RS78}. Our argument relies on the following important relation between $\dot{\lambda}(\theta(s_*))$ and the crossing form $\cQ_{s_*,\cF^2_{\lambda(s_*)}}$ at the crossing point $s_*$ (here and bellow we let $\dot{\lambda}:=\frac{d\lambda}{d\theta}$ ). 
\begin{theorem}\lb{FE} Let  $\lambda(\theta)\in\Sp(H_{\theta})$ be a simple eigenvalue with the corresponding eigenfunction $u_{\theta}$ for  $\theta\in(\theta_1,\theta_2)\subset (0,\pi)\cup(\pi,2\pi)$. Assume that $s_*$ is a crossing of the path $\cF_{\theta(s)}^1$ and the fixed Lagrangian plain $\cF_{\lambda(s_*)}^2$. Then one has
\begin{equation}\lb{a1}
	\frac{d\lambda}{d\theta}(\theta(s_*))=2\Im(u'_{\theta}(a),u_{\theta}(a))_{\C^n}=\cQ_{s_*,\cF_{\lambda(s_*)}^2}(\tr y_{\theta(s_*)},\tr y_{\theta(s_*)});
\end{equation} 
where $y_{\theta}$ are defined as $(y_{\theta})_{2k-1}:=\Re (u_{\theta})_k, (y_{\theta})_{2k}:=\Im (u_{\theta})_k,$\ $\ 1\leq k\leq n.$
\end{theorem}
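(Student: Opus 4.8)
The plan is to prove the two equalities in \eqref{a1} separately, normalizing throughout so that $\|u_\theta\|_{L^2([a,b],\C^n)}=1$; this normalization is needed because the two right-hand quantities scale like $\|u_\theta\|^2$ whereas $d\lambda/d\theta$ does not. Since $\lambda(\theta)$ is a simple eigenvalue, the branch $\theta\mapsto(\lambda(\theta),u_\theta)$ is real-analytic on $(\theta_1,\theta_2)$ by \cite[XIII.16]{RS78}, so $v:=\partial_\theta u_\theta$ is well defined; and on the segment carrying the crossing, where $\theta(s)=s+\theta_1-r$ as in \eqref{par3}, we have $d\theta/ds=1$, which lets me identify $s$- and $\theta$-derivatives.

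For the first equality I would differentiate $-u_\theta''+Vu_\theta=\lambda(\theta)u_\theta$ in $\theta$ and pair with $u_\theta$ in $L^2([a,b],\C^n)$. Because $V=V^\top$ is real symmetric, the interior terms cancel by the Lagrange identity, leaving the boundary expression
\begin{equation}
\frac{d\lambda}{d\theta}\,\|u_\theta\|_{L^2}^2=\big[(v,u_\theta')_{\C^n}-(v',u_\theta)_{\C^n}\big]_a^b. \nonumber
\end{equation}
I then differentiate the boundary conditions to obtain $v(b)=\bfi e^{\bfi\theta}u_\theta(a)+e^{\bfi\theta}v(a)$ together with the corresponding identity for $v'$, and substitute these into the term at $x=b$. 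Using $\overline{e^{\bfi\theta}}=e^{-\bfi\theta}$ and the conjugate-linearity of $(\cdot,\cdot)_{\C^n}$ in the second slot, the $v$-contributions at $b$ cancel exactly against those at $a$, and the residual collapses to $2\Im(u_\theta'(a),u_\theta(a))_{\C^n}$, which is the claim after normalization.

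For the crossing-form equality the starting point is that $\tr y_{\theta(s_*)}$ belongs to the intersection $\cF^1_{\theta(s_*)}\cap\cF^2_{\lambda(s_*)}$: it is the trace of a solution of \eqref{2.1}, hence lies in $\cF^2_{\lambda(s_*)}$, and it satisfies the $\theta(s_*)$-boundary conditions \eqref{2.2}, hence lies in $\cF^1_{\theta(s_*)}$. As in the discussion preceding Definition \ref{def21} and in Step 1 of Theorem \ref{t2.2}, for any $C^1$ curve $w(s)\in\cF^1_{\theta(s)}$ with $w(s_*)=\tr y_{\theta(s_*)}$ the crossing form is given by $\cQ_{s_*,\cF^2_{\lambda(s_*)}}(\tr y_{\theta(s_*)},\tr y_{\theta(s_*)})=\omega\big(\tr y_{\theta(s_*)},\tfrac{d}{ds}w(s)\big|_{s_*}\big)$, independent of the extension. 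I would take the natural extension $w(s):=\tr y_{\theta(s)}$, the trace of the analytic family of eigenfunctions, which is admissible precisely because $y_{\theta(s)}$ satisfies the $\theta(s)$-boundary conditions. Writing $\dot y:=\tfrac{d}{ds}y_{\theta(s)}|_{s_*}$ and invoking the integration-by-parts form of $\omega$ from the proof of Proposition \ref{p2.1}(ii), namely $\omega(\tr y,\tr\dot y)=\int_a^b[(-y'',\dot y)_{\R^{2n}}-(y,-\dot y'')_{\R^{2n}}]\,dx$, I would substitute $-y''=\lambda y-(V\otimes I_2)y$ and the $s$-differentiated equation $-\dot y''=-(V\otimes I_2)\dot y+\dot\lambda\,y+\lambda\dot y$. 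Symmetry of $V\otimes I_2$ kills the potential terms and the two $\lambda(y,\dot y)$ terms cancel pointwise, leaving only the term proportional to $\dot\lambda\,|y(x)|^2$; integrating and using the dictionary $\|y_{\theta}\|_{L^2}=\|u_\theta\|_{L^2}$ gives the crossing form as $\dot\lambda\,\|u_\theta\|_{L^2}^2$, which with the first equality is \eqref{a1}.

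I expect the main obstacle to be sign-and-factor bookkeeping rather than any isolated difficulty. Three points demand care: the real/complex dictionary $(y,z)_{\R^{2n}}=\re(u,w)_{\C^n}$ together with the fact that each complex eigenfunction yields the \emph{pair} $y_\theta,(-I_n\otimes J)y_\theta$ of real solutions, so one must verify the form is evaluated on the intended diagonal vector; the conjugate-linearity accounting in the boundary substitution, which is exactly what generates the factor $2$ and the imaginary part; and, most delicately, fixing the orientation of the crossing form relative to the parametrization \eqref{par3} (in which $\theta$ increases with $s$) so that the surviving $\dot\lambda$-term enters with the sign consistent with Theorem \ref{t2.2}. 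Analyticity of the simple branch is what licenses both differentiating $u_\theta$ and choosing the smooth extension $w(s)=\tr y_{\theta(s)}$.
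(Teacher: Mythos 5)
Your derivation of the first equality in \eqref{a1} is exactly the paper's: differentiate the eigenvalue equation in $\theta$, pair with $u_\theta$, integrate by parts, and substitute the differentiated boundary conditions so that only the terms generated by the factor $\bfi e^{\bfi\theta}$ survive, giving $\dot\lambda\|u_\theta\|^2=2\Im(u_\theta'(a),u_\theta(a))_{\C^n}$. For the second equality you genuinely depart from the paper. The paper parametrizes the moving plane $\cF^1_{\theta(s)}$ explicitly as $\{(p,(I_n\otimes M_{\theta(s)})p,-q,(I_n\otimes M_{\theta(s)})q)^\top\}$, differentiates $M_{\theta(s)}$ using $\tfrac{d}{d\theta}M_\theta=-JM_\theta$, and obtains the crossing form as the boundary expression $2\big(y_{\theta(s_*)}'(a),(I_n\otimes J)y_{\theta(s_*)}(a)\big)_{\R^{2n}}$, which it then matches to $\dot\lambda$ via \eqref{b1}. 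You instead choose the extension $w(s)=\tr y_{\theta(s)}$ along the analytic eigenfunction branch and evaluate $\omega(\tr y,\tr\dot y)$ by Green's identity; this is legitimate (the extension independence you invoke holds because $\cF^1_{\theta(s_*)}$ is Lagrangian, so the tangential part of $\dot w$ drops out), and it is arguably cleaner, since it produces $\dot\lambda\,\|y\|_{L^2}^2$ up to sign in one stroke without ever touching the matrices $M_\theta$.

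The gap is precisely the sign you defer to ``orientation bookkeeping,'' and it cannot be deferred, because the content of the second equality in \eqref{a1} is an exact signed identity and the parametrization is already fixed by \eqref{par3} with $d\theta/ds=1$. Carry out your own substitution: with $-y''=\lambda y-(V\otimes I_2)y$ and $-\dot y''=-(V\otimes I_2)\dot y+\dot\lambda y+\lambda\dot y$, the integrand $(-y'',\dot y)_{\R^{2n}}-(y,-\dot y'')_{\R^{2n}}$ reduces to $-\dot\lambda\,|y(x)|^2$, not $+\dot\lambda\,|y(x)|^2$ --- the surviving term sits in the second slot and inherits the minus sign in front of it. So your route, as written, yields $\cQ_{s_*,\cF^2_{\lambda(s_*)}}(\tr y,\tr y)=-\dot\lambda\,\|u_\theta\|^2$, the negative of what you assert. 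To close the argument you must reconcile this with the boundary-side computation: evaluating $\omega(\tr y,\tr\dot y)$ directly from the differentiated boundary conditions gives $2\big(y'(a),(I_n\otimes J)y(a)\big)_{\R^{2n}}$, and a careful translation through the dictionary $y_{2k-1}=\Re u_k$, $y_{2k}=\Im u_k$ shows this equals $-2\Im(u'(a),u(a))_{\C^n}$ under the convention in which the first equality of \eqref{a1} holds. In other words, your two computations of the same quantity $\omega(\tr y,\tr\dot y)$ are mutually consistent but both land on $-\dot\lambda\|u_\theta\|^2$; the identification in \eqref{b1} between $2\re(u'(a),\bfi u(a))_{\C^n}$ and $2(y'(a),(I_n\otimes J)y(a))_{\R^{2n}}$ is exactly where a sign must be pinned down against a test case (e.g.\ $n=1$, $V=0$, $u=e^{\bfi kx}$). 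Until you do this, the proof of the second equality is incomplete: every structural step is right, but the theorem you would have proved differs from \eqref{a1} by a sign.
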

\begin{proof} Since $u_{\theta}$ is the eigenfunction corresponding to the eigenvalue $\lambda(\theta)$ one has $-u''_{\theta}+Vu_{\theta}=\lambda (\theta)u_{\theta}.$
Differentiating with respect to $\theta$ yields	$-\dot{u}''_{\theta}+V\dot{u}_{\theta}=\dot{\lambda} (\theta)u_{\theta}+{\lambda} (\theta)\dot{u}_{\theta},$
multiplying by $\overline{u_{\theta}}$ and integrating one obtains
\begin{equation}\lb{a2}
(H_{\theta}\dot{u}_{\theta},u_{\theta})_{L^2([a,b],\C^n)}=(\dot{\lambda} (\theta)u_{\theta},u_{\theta})_{L^2([a,b],\C^n)}+({\lambda} (\theta)\dot{u}_{\theta},u_{\theta})_{L^2([a,b],\C^n)}.
\end{equation}
Next, we integrate by parts the first term in \eqref{a2} and arrive at
\begin{align}
(\dot{u}_{\theta},&H_{\theta}u_{\theta})_{L^2([a,b],\C^n)}-({\dot{u}'}_{\theta}(b),u_{\theta}(b))_{\C^n}+({\dot{u}'}_{\theta}(a),u_{\theta}(a))_{\C^n}
+(\dot{{u}}_{\theta}(b),u'_{\theta}(b))_{\C^n}\no\\
&-(\dot{{u}}_{\theta}(a),u'_{\theta}(a))_{\C^n}=\dot{\lambda} (\theta)\|u_{\theta}\|_{L^2([a,b],\C^n)}^2+(\dot{u}_{\theta},{\lambda} (\theta)u_{\theta})_{L^2([a,b],\C^n)}.\no
\end{align}
Hence,
\begin{align}
&\dot{\lambda} (\theta)\|u_{\theta}\|_{L^2([a,b],\C^n)}^2=-({\dot{u}'}_{\theta}(b),u_{\theta}(b))_{\C^n}+({\dot{u}'}_{\theta}(a),u_{\theta}(a))_{\C^n}\no\\
&\qquad\qquad+(\dot{{u}}_{\theta}(b),u'_{\theta}(b))_{\C^n}-(\dot{{u}}_{\theta}(a),u'_{\theta}(a))_{\C^n}.\lb{a3}
\end{align}
We differentiate  \eqref{1.7} with $u$ replaced by $u_{\theta}$ and plug the obtained values of $\dot{u}_{\theta}(b),\dot{u}_{\theta}'(b)$, expressed in terms of $\dot{u}_{\theta}(a),\dot{u}_{\theta}'(a)$, in \eqref{a3}. We arrive at
\begin{equation}\lb{b1}
\dot{\lambda} (\theta)\|u_{\theta}\|_{L^2([a,b],\C^n)}^2=2\re(u'_{\theta}(a),\bfi u_{\theta}(a))_{\C^n}=2(y'_{\theta}(a), (I_n\otimes J) y_{\theta}(a))_{\R^{2n}}.
\end{equation}
We are now ready to compute the right hand side of \eqref{a1}. To this end let us use parametrization \eqref{par3} with $r:=\lambda(s_*)$. For some small neighbourhood $\Sigma_{s_*}\subset(r,r+ \theta_2-\theta_1)$ of $s_*$ there exists a family $(s+s_*)\mapsto R_{(s+s_*)}$ such that $$ R_{(s+s_*)}\in C^1\Big(\Sigma_{s_*}, \cL\big(\cF^1_{\theta(s_*)},  (\cF^1_{\theta(s_*)})^{\perp}\big)\Big),\ R_{s_*}=0_{8n},$$ and $\cF^1_{\theta(s)}=\{h+R_{(s+s_*)}h\big| h\in \cF^1_{\theta(s_*)} \}$ for all $(s+s_*)\in \Sigma_{s_*}$. Then a family of vectors $h_s:=\tr y_{\theta(s_*)}+R_{(s+s_*)}\tr y_{\theta(s_*)}$ is of the form 
\begin{equation}\lb{b2}
h_s=\big(p_{\theta(s)}, (I_n\otimes M_{\theta(s)})p_{\theta(s)},-q_{\theta(s)},(I_n\otimes M_{\theta(s)})q_{\theta(s)} \big)^{\top},
\end{equation}
with $p_{\theta(s_*)}=y_{\theta(s_*)}(a)\text{\ and\ }q_{\theta(s_*)}=y'_{\theta(s_*)}(a).$
Denoting $\frac{df}{ds}=\grave{f}$ and differentiating \eqref{b2} with respect to $s$ near $s_*$ we obtain
\begin{align}
\grave{h}_s=\big(&\grave{p}_{\theta(s)},-\big(I_n\otimes (JM_{\theta(s)})\big)p_{\theta(s)}+(I_n\otimes M_{\theta(s)})\grave{p}_{\theta(s)},\no\\
&-\grave{q}_{\theta(s)},-\big(I_n\otimes (JM_{\theta(s)})\big)q_{\theta(s)}+(I_n\otimes M_{\theta(s)})\grave{q}_{\theta(s)} \big)^{\top}.\lb{b4}
\end{align}
Finally,
\begin{align}
&\cQ_{s_*,\cF_{\lambda(s_*)}^2}(\tr y_{\theta(s_*)},\tr y_{\theta(s_*)})=\omega(\tr y_{\theta(s_*)},\grave{R}_{s_*}\tr y_{\theta(s_*)})=\omega(\tr y_{\theta(s_*)}, \grave{h}_{s_*})\no\\&\quad=2\big(q_{\theta(s_*)},(I_n\otimes J) p_{\theta(s_*)}\big)_{\R^{2n}}=2\big(y'_{\theta(s_*)}(a),(I_n\otimes J) y_{\theta(s_*)}(a)\big)_{\R^{2n}}.\lb{b5}
\end{align}
Equation \eqref{b1} with $\theta=\theta(s_*)$ combined with \eqref{b5} and normalization of the eigenfunction $u_{\theta(s_*)}$ yield \eqref{a1}.
\end{proof}
With Theorem \ref{FE} at hands we derive monotonicity of eigenvalues with respect to $\theta$. We remark that the second part of the following claim is a well known result which can be found in \cite[Theorem XIII.89]{RS78}
\begin{proposition} \lb{mon1}
Let  $\lambda(\theta)\in\Sp(H_{\theta})$ be a simple eigenvalue with the corresponding eigenfunction $u_{\theta}$ for  $\theta\in(\theta_1,\theta_2)\subset (0,\pi)\cup(\pi,2\pi)$. Then either $\lambda(\theta)$ is monotone or for some $\theta_*\in (\theta_1,\theta_2)$ one has 
\begin{equation}\lb{c1}
\Im(u'_{\theta_*}(a),u_{\theta_*}(a))_{\C^n}=0.
\end{equation}
Moreover, if $n=1$, that is, the potential $V$ is scalar valued, then $\lambda({\theta})$ is monotone.
\end{proposition}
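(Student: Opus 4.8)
The entire statement will follow from Theorem~\ref{FE}. The first step is to observe that its identity holds at every point of the interval, not merely at one prescribed crossing. Indeed, fix $\theta_0\in(\theta_1,\theta_2)$ and put $r:=\lambda(\theta_0)$; since $u_{\theta_0}\in\ker(H_{\theta_0}-r)$ we have $\cF^1_{\theta_0}\cap\cF^2_r\neq\{0\}$, so along the segment \eqref{par3} (on which $\lambda\equiv r$) the point with $\theta(s_*)=\theta_0$ is a crossing of $\cF^1_{\theta(s)}$ with $\cF^2_r$. Theorem~\ref{FE} then gives
\[
\frac{d\lambda}{d\theta}(\theta_0)=2\Im\big(u'_{\theta_0}(a),u_{\theta_0}(a)\big)_{\C^n},
\]
and as $\theta_0$ is arbitrary this holds on all of $(\theta_1,\theta_2)$. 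Since $H_\theta$ is analytic in $\theta$ on each component of $(0,\pi)\cup(\pi,2\pi)$, both $\lambda(\theta)$ and $\dot\lambda(\theta)$ are continuous there.

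Given this, the first assertion is a soft argument. If $\dot\lambda$ vanishes nowhere on $(\theta_1,\theta_2)$ then, being continuous, it keeps a constant sign and $\lambda(\theta)$ is strictly monotone; otherwise $\dot\lambda(\theta_*)=0$ for some $\theta_*$, and the displayed formula yields $\Im(u'_{\theta_*}(a),u_{\theta_*}(a))_{\C^n}=0$, which is exactly \eqref{c1}. These two alternatives are precisely the dichotomy claimed.

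The substantive part is the scalar case $n=1$, where I would show that $\Im(u'_\theta(a),u_\theta(a))_\C$ cannot vanish, so that $\dot\lambda$ has constant sign and $\lambda$ is monotone. Writing $u_\theta=f+\bfi g$ with $f:=\Re u_\theta$ and $g:=\Im u_\theta$, both $f$ and $g$ solve the real equation $-w''+Vw=\lambda w$, and a short computation identifies $\Im(u'_\theta(a),u_\theta(a))_\C$ with $\pm(fg'-f'g)(a)$, i.e.\ $\pm$ the Wronskian of $f$ and $g$. Because $V$ and $\lambda$ are real this Wronskian is constant in $x$, so its vanishing at $a$ would make $f,g$ linearly dependent and force $u_\theta=(\alpha+\bfi\beta)w$ for some real-valued solution $w$ and a nonzero scalar $\alpha+\bfi\beta$. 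Then the boundary conditions \eqref{1.7} reduce to $w(b)=e^{\bfi\theta}w(a)$ and $w'(b)=e^{\bfi\theta}w'(a)$ with all four boundary values real; as $e^{\bfi\theta}\notin\R$ for $\theta\in(0,\pi)\cup(\pi,2\pi)$, this forces $w(a)=w'(a)=0$ and hence $w\equiv 0$ by uniqueness for the ODE, contradicting that $u_\theta$ is an eigenfunction.

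The only genuinely delicate point is this last computation: recognizing $\Im(u'_\theta(a),u_\theta(a))_\C$ as a constant Wronskian of $\Re u_\theta$ and $\Im u_\theta$, and then using that $e^{\bfi\theta}$ is non-real away from $\theta\in\{0,\pi\}$ to preclude its vanishing. Everything else rests on Theorem~\ref{FE} together with the continuity of $\dot\lambda$.
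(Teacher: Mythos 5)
Your proof is correct and follows essentially the same route as the paper: the dichotomy is read off from the formula $\dot\lambda(\theta)=2\Im(u'_\theta(a),u_\theta(a))_{\C^n}$ of Theorem~\ref{FE}, and the scalar case is settled by a non-vanishing Wronskian. The only (cosmetic) difference is that you work with $W(\Re u_\theta,\Im u_\theta)$ and spell out why its vanishing would force $u_\theta$ to be a scalar multiple of a real solution, whereas the paper uses $W(u_\theta,\overline{u}_\theta)=-2\bfi\,W(\Re u_\theta,\Im u_\theta)$ and leaves the linear independence of $u_\theta$ and $\overline{u}_\theta$ implicit; your version actually supplies that missing detail.
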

\begin{proof}
The alternative is a simple corollary of the first equation in \eqref{a1}.
Next, we show that if $n=1$ then $\dot{\lambda}$ does not vanish on $(\theta_1,\theta_2)$. Indeed, using \eqref{a1}, $\bfi\dot{\lambda}(\theta_{*})={2\bfi}\Im(u'_{\theta_*}(a),u_{\theta_*}(a))_{\C^n}=W(u_{\theta_*},\overline{u}_{\theta_*})$, the Wronskian of $u$ and $\overline{u}_{\theta_*}$. But $W(u_{\theta_*},\overline{u}_{\theta_*})\not=0$ by linear independence of $u_{\theta_*}$ and $\overline{u}_{\theta_*}$.
\end{proof}
We remark that the expression $\Im(u'_{\theta}(x),u_{\theta}(x))_{\C^n}$ does not depend on $x\in[a,b]$ since $u_\theta$ solves \eqref{1.6}, \eqref{1.7}. Also, this expression plays an essential role in \cite{CJ}.

\section{Variation Of The Scaling Parameter}\lb{SectionT}
Throughout this section the parameter $\theta\in[0,2\pi)$ is fixed. We consider the operator $\left(-\partial_x^2\right)_{\theta}$ on $L^2([-L,L])$, here the interval is chosen to be symmetric to simplify notations. Restricting the potential $V$ to $[-tL,tL]$, $0< t\leq 1$, we obtain a family of Schr\"odinger operators $H_{\theta}(t):=-\partial_x^2+V\big|_{[-tL,tL]}$ on $L^2([-tL,tL])$ with the $\theta-$periodic boundary conditions, that is, conditions \eqref{1.7} with $a=-tL$ and $b=tL$. Denoting $\cN(r,t):=\sum_{\lambda<r}\dim _{\C}\ker (H_\theta(t)-\lambda),$ we will compute below $\cN(r,t_2)-\cN(r,t_1)$ in terms of the Maslov index (Theorem \ref{t3.1}), and evaluate it (Theorem \ref{4.2}) in case of the sign-definite potential. Firstly, let us write the eigenvalue problem 
$H_{\theta}(t)u=\lambda u$
as the boundary value problem
\begin{align}
&-u''(x)+V(x)u(x)=\lambda u(x),\ \lambda\in\R, x\in [-tL,tL],\lb{3.53.1}\\
&\qquad\ \ u(tL)=e^{\bfi \theta}u(-tL),\ u'(tL)=e^{\bfi \theta}u'(-tL).\lb{3.54.2}
\end{align}
Secondly, rescaling equations \eqref{3.53.1},\eqref{3.54.2}, we arrive at
\begin{align}
-&u''(x)+t^2V(tx)u(x)=t^2\lambda u(x),\ \lambda\in\R, x\in [-L,L]\lb{3.53},\\
&\qquad u(L)=e^{\bfi \theta}u(-L),\ u'(L)=e^{\bfi \theta}u'(-L).\lb{3.54}
\end{align}
 Finally, we define a $2n$ dimensional subspace of $\C^{4n}$ by
 \begin{equation}
 G_{\lambda,t}:=\{(u(-L), u(L), -u'(-L), u'(L))^\top: u \text{\ is a solution to \eqref{3.53}}\},	\lb{3.55}
 \end{equation}
 and observe that $\ker(H_{\theta}(t)-\lambda)\not=\{0\}$ if and only if $G_{\lambda,t}\cap F^1_{\theta}\not=\{0\}$(cf. \eqref{1.8}). As in Section \ref{SectionTheta}, using \eqref{2.4} we obtain an equivalent to \eqref{3.53},\eqref{3.54} $4n\times 4n$  system
 \begin{align}
 &-y''(x)+t^2(V(tx)\otimes I_2)y(x)=t^2\lambda y(x),\ \lambda\in\R\lb{3.59},\\
 &\qquad y(L)=(I_n\otimes M_{\theta})y(-L),\ y'(L)=(I_n\otimes M_{\theta})y'(-L)\lb{3.57}
 \end{align} 
and denote $\cG_{\lambda,t}:=\{(y(-L), y(L), -y'(-L), y'(L))^\top: u \text{\ is a solution to \eqref{3.59}}\}.$
Applying Proposition \ref{p2.1} with $V(x):=t^2V(tx)$ and $\lambda:=t^2\lambda$, one concludes that the subspace $\cG_{\lambda,t}\subset \R^{8n}$ is in fact Lagrangian with respect to the symplectic form $\omega$. Since $\left(-\partial_x^2\right)_{\theta}\geq 0$ and the potential $V$ is bounded, the spectrum of $H_{\theta}(t)$ is uniformly bounded from below, that is,
\begin{equation}\lb{3.61}
 \Sp (H_{\theta}(t))\subset (\lambda^{\infty},\infty), \text{\ for some\ } \lambda^{\infty} \text{\ and all\ } t\in(0,1].
\end{equation}
Using the parametrization of the boundary of the rectangle in Figure 1(II),
 \begin{align}
 &\lambda(s)=s,\, t(s)=\tau ,\, s\in\Sigma_1:=[\lambda^{\infty},r],\lb{3.62}\\
 &\lambda(s)=r,\, t(s)=s+\tau-r ,\, s\in\Sigma_2:=[r,1+r-\tau ],\lb{3.63}\\
 &\lambda(s)= -s+1-\tau+2r ,\, t(s)= 1,\, s\in\Sigma_3:=[1-\tau+r ,1-\tau -\lambda^{\infty}+2r],\lb{3.64}\\
 &\lambda(s)=\lambda^{\infty},\, t(s)=-s+2-\tau -\lambda^{\infty}+2r,\lb{3.65}\\
 &\hskip3cm s\in\Sigma_4:=[1-\tau -\lambda^{\infty}+2r, 2(1-\tau) -\lambda^{\infty}+2r],\no
 \end{align}
 we introduce the path $s\mapsto \cG_{\lambda(s),t(s)}$ for any $r>\lambda_{\infty}$ and $\tau\in(0,1]$.
 \begin{theorem}\lb{t3.1}
 Assume that $V\in C^1([-L,L], \R^{n\times n}),\ V=V^{\top}$ and fix $\tau\in(0,1]$ and $\theta\in [0,2\pi)$. Then for any $r>\lambda_{\infty}$,
 \begin{equation}\lb{3.66}
 \cN(r,\tau)-\cN(r,1)= {}^{1}/_{2}\mi (\cG_{r,t}|_{\tau\leq t\leq 1},\cF^1_{\theta}).
 \end{equation}
 \end{theorem}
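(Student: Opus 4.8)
The plan is to mirror the proof of Theorem \ref{t2.2}, now treating the scaling $t$ (rather than $\theta$) as the second parameter and letting the solution plane $\cG_{\lambda,t}$ be the moving plane tested against the fixed boundary plane $\cF^1_\theta$. First I would introduce, along the parametrization \eqref{3.62}--\eqref{3.65} of the boundary $\Gamma=\cup_j\Sigma_j$ of the rectangle in Figure 1(II), the paths $\Upsilon_1(s):=\cG_{\lambda(s),t(s)}$ and $\Upsilon_2(s):=\cF^1_\theta$ (constant in $s$), together with their direct sum $\wti{\Upsilon}(s):=\Upsilon_1(s)\oplus\Upsilon_2(s)$ with values in $\Lambda_{\wti{\omega}}(8n)$, $\wti{\omega}=\omega\oplus(-\omega)$. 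Because the assignment $(\lambda,t)\mapsto\cG_{\lambda,t}\oplus\cF^1_\theta$ is continuous (indeed $C^1$, by the hypothesis $V\in C^1$) on the whole filled rectangle, the boundary loop $\wti{\Upsilon}$ bounds a disk and is therefore null-homotopic, so homotopy invariance gives $\mi(\wti{\Upsilon},\Delta)=0$. Since $\Upsilon_2\equiv\cF^1_\theta$ sits in the second slot, the two-path reduction recalled at the end of Section \ref{SectionTheta} identifies $\mi(\wti{\Upsilon}|_{\Sigma_j},\Delta)=\mi(\cG_{\lambda(\cdot),t(\cdot)}|_{\Sigma_j},\cF^1_\theta)$ with no sign flip, so I only need the four boundary contributions.

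Next I would evaluate the segments $\Sigma_1$ ($t\equiv\tau$, with $\lambda$ increasing from $\lambda^{\infty}$ to $r$) and $\Sigma_3$ ($t\equiv1$, with $\lambda$ decreasing from $r$ to $\lambda^{\infty}$) via crossing forms, exactly as in Step 1 of Theorem \ref{t2.2}. The only genuinely new feature is that the effective spectral parameter in the rescaled equation \eqref{3.59} is $t^2\lambda$; repeating the integration-by-parts computation \eqref{2.17} with $\lambda$ replaced by $t^2\lambda$ shows that the crossing form at a crossing $s_*$ equals $\pm\,t^2\|y^0\|^2_{L^2([a,b],\R^{2n})}$, the sign being governed by the direction of travel in $\lambda$. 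Hence every interior crossing on $\Sigma_1$ is negative and every interior crossing on $\Sigma_3$ is positive, each of multiplicity $\dim_{\R}(\cG_{\lambda,t}\cap\cF^1_\theta)=2\dim_{\C}\ker(H_\theta(t)-\lambda)$ by the pairing argument of Section \ref{SectionTheta}. Invoking Theorem \ref{masform} to dispose of endpoints (at the $\lambda=r$ endpoint a definite form contributes $n_+=0$ on $\Sigma_1$ and $-n_-=0$ on $\Sigma_3$, while $\lambda=\lambda^{\infty}$ lies below the spectrum by \eqref{3.61}), I obtain $\mi(\wti{\Upsilon}|_{\Sigma_1},\Delta)=-2\cN(r,\tau)$ and $\mi(\wti{\Upsilon}|_{\Sigma_3},\Delta)=+2\cN(r,1)$, in analogy with \eqref{2.12}. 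On $\Sigma_4$ one has $\lambda\equiv\lambda^{\infty}$, so there are no crossings and $\mi(\wti{\Upsilon}|_{\Sigma_4},\Delta)=0$; on $\Sigma_2$ one has $\lambda\equiv r$ and $t$ running from $\tau$ to $1$, giving precisely $\mi(\wti{\Upsilon}|_{\Sigma_2},\Delta)=\mi(\cG_{r,t}|_{\tau\le t\le1},\cF^1_\theta)$.

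Finally, additivity of the Maslov index along $\Gamma$ together with $\mi(\wti{\Upsilon},\Delta)=0$ yields
\begin{equation*}
0=-2\cN(r,\tau)+\mi(\cG_{r,t}|_{\tau\le t\le1},\cF^1_\theta)+2\cN(r,1),
\end{equation*}
which rearranges to \eqref{3.66}. The main obstacle I anticipate is not the bookkeeping but the analytic input underlying the crossing-form step: varying $t$ changes the \emph{coefficients} of the ODE \eqref{3.59}, not merely the boundary data as in Theorem \ref{t2.2}, so I must know that $\cG_{\lambda,t}$ depends in a $C^1$ fashion on $(\lambda,t)$ in order for the graph representation $\cG_{\lambda(s),t(s)}=\{h+R_{s}h\}$ and its $s$-derivative to exist. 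This is exactly where the stronger hypothesis $V\in C^1$ is used, and it should follow from smooth dependence of solutions of \eqref{3.59} on the parameters. I would also confirm that the crossing forms on $\Sigma_1$ and $\Sigma_3$ are computed against the \emph{fixed} plane $\cF^1_\theta$, so that only the $\lambda$-derivative of $\cG_{\lambda,t}$ enters and the computation stays formally identical to \eqref{2.17} up to the harmless positive factor $t^2$.
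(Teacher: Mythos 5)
Your proposal is correct and follows essentially the same route as the paper: parametrize the boundary of the rectangle in Figure 1(II), use that the resulting closed loop has vanishing Maslov index, compute the crossing forms on $\Sigma_1$ and $\Sigma_3$ by the analogue of \eqref{2.17} with spectral parameter $t^2\lambda$ (yielding the factor $t^2$ in \eqref{3.69}), rule out crossings on $\Sigma_4$ via \eqref{3.61}, and solve for the $\Sigma_2$ contribution. The only cosmetic difference is that you package the argument as a two-path index $\mi(\cG\oplus\cF^1_\theta,\Delta)$ and then reduce to $\mi(\cG,\cF^1_\theta)$, whereas the paper works with the one-path index directly; these agree by the remark at the end of Section 2.
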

 \begin{proof}
Since the parametrization \eqref{3.62}-\eqref{3.65} defines a loop $s\mapsto \cG_{\lambda(s),t(s)}$, the Maslov index is equal to zero,
$\mi(\cG_{\lambda(s),t(s)},\cF^1_{\theta})=0$,
moreover, by a standard property of the Maslov index,
	\begin{align}
	\mi(\cG_{\lambda(s),t(s)},\cF^1_{\theta})=&\mi(\cG_{\lambda(s),t(s)}|_{\Sigma_1},\cF^1_{\theta})+\mi(\cG_{\lambda(s),t(s)}|_{\Sigma_2},\cF^1_{\theta})\no\\
	&+\mi(\cG_{\lambda(s),t(s)}|_{\Sigma_3},\cF^1_{\theta})+\mi(\cG_{\lambda(s),t(s)}|_{\Sigma_4},\cF^1_{\theta})\lb{3.68}.
	\end{align}
Next we analyze each term in \eqref{3.68}. The calculations in Step 1 of the proof of Theorem \ref{t2.2} for $\Sigma_1$ can be repeated in the current settings i.e. with the potential $\tau^2V(\tau x)$ and the spectral parameter $\tau^2\lambda$. Applying \eqref{2.17} with $\lambda$ replaced by $\lambda\tau^2$, one has the formula for the crossing form at a conjugate point  $s_*\in\Sigma_1$,
\begin{equation}\lb{3.69}
\cQ_{s_*, \cF^1_{\theta}}(\tr(y^0),\tr(y^0))=-\tau^2\|y^0\|^2_{L^2([-L,L])},
\end{equation}
where $y^0$ solves the boundary value problem \eqref{3.59},\eqref{3.57} with $t=\tau$. Therefore, the Maslov index of the part of the loop restricted to $\Sigma_1$ can be computed as previously,
	$\mi(\cG_{\lambda(s),t(s)}|_{\Sigma_1},\cF^1_{\theta})=-2\cN(r,\tau)$;
likewise, taking into account the orientation,
$\mi(\cG_{\lambda(s),t(s)}|_{\Sigma_3},\cF^1_{\theta})=2\cN(r,1)$.
The uniform boundedness of the spectrum of $H_{\theta}(t)$, cf. \eqref{3.61}, rules out existence of conjugate points in $\Sigma_4$, that is 
$\mi(\cG_{\lambda(s),t(s)}|_{\Sigma_4})=0$.
Combining this together, 
we obtain \eqref{3.66}.
 \end{proof}
 An analog of \eqref{3.66} holds for the function $\tilde{\cN}([r_1,r_2),t)=\cN(r_2,t)-\cN(r_1,t)$ counting the numbers of eigenvalues of $H_{\theta}(t)$ in the interval $[r_1,r_2)$. Namely, 
 \begin{align}
 		\tilde{\cN}([r_1,& r_2),t_2)-\tilde{\cN}([r_1,r_2), t_1)\no\\
 		&={}^{1}/_{2} \mi(\cG_{r_2,t_1\leq t\leq t_2},\cF^1_{\theta})-{}^{1}/_{2} \mi(\cG_{r_1,t_1\leq t\leq t_2},\cF^1_{\theta}),\ r_1<r_2.\lb{3.73}
 \end{align}
 
 In conclusion we provide a sufficient condition for crossings to be sign definite in $\Sigma_2$, or, in other words, for the monotonicity of the Maslov index. This result can be viewed as a version of the celebrated Morse-Smale theorem \cite{S65}. We recall that $\mo(H_{\theta})$, the Morse index, is the number of negative eigenvalues of $H_{\theta}$ counting their multiplicities. 
 \begin{theorem}\lb{4.2}
Assume that $V\in C^1([-L,L], \R^{n\times n}), V=V^{\top}$ and fix $\theta\in [0,2\pi)$. 
\begin{itemize}\item[(i)]
If $\Sp(V(x))\subset (-\infty,0]$ a.e. or $\Sp(2tV(tx)-t^2V'(tx))\subset (-\infty,0)$  for all $t\in(0,1]$ then 
\begin{equation}\lb{3.73.2}
\mo(H_{\theta})-\mo(H_{\theta}(\tau))=\sum\nolimits_{\tau\leq t < 1}\dim_{\C}\ker H_{\theta}(t).
\end{equation}
\item[(ii)] If $\Sp(2tV(tx)-t^2V'(tx))\subset (0,\infty)$ then 
\begin{equation}\lb{3.73.1}
\mo(H_{\theta}(\tau))-\mo(H_{\theta})=\sum\nolimits_{\tau< t \leq 1}\dim_{\C}\ker H_{\theta}(t).
\end{equation}
\end{itemize}
\end{theorem}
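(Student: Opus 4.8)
The plan is to read everything off from Theorem~\ref{t3.1} specialized to $r=0$. Since $\mo(H_\theta(t))=\cN(0,t)$ and $H_\theta=H_\theta(1)$, formula \eqref{3.66} becomes
\[
\mo(H_\theta(\tau))-\mo(H_\theta)=\tfrac12\,\mi\big(\cG_{0,t}|_{\tau\le t\le1},\cF^1_\theta\big),
\]
so the whole statement reduces to controlling the Maslov index of the single edge $\Sigma_2$, on which $\lambda\equiv0$ while $t$ increases from $\tau$ to $1$. I would prove that under the hypotheses every crossing on $\Sigma_2$ is sign definite, negative in case~(i) and positive in case~(ii). Granting this, Theorem~\ref{masform} fixes the contributions: an interior crossing at $t$ contributes $\sign\cQ=\mp\dim_\R(\cG_{0,t}\cap\cF^1_\theta)=\mp 2\dim_\C\ker H_\theta(t)$, the left endpoint $t=\tau$ contributes $-n_-(\cQ)$, and the right endpoint $t=1$ contributes $n_+(\cQ)$. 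The half-open ranges $\tau\le t<1$ in \eqref{3.73.2} and $\tau<t\le1$ in \eqref{3.73.1} are exactly the bookkeeping of which endpoint survives once the sign is pinned down; dividing by $\tfrac12$ and rearranging then yields the two identities, and the factor $2$ relating $\dim_\R(\cG\cap\cF^1_\theta)$ to $\dim_\C\ker H_\theta$ is what produces the plain $\sum\dim_\C\ker$ on the right.

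Next I would compute the crossing form on $\Sigma_2$, imitating Step~1 of the proof of Theorem~\ref{t2.2}. Fix a crossing $s_*$, set $t_*=t(s_*)$, and let $y^0$ solve the rescaled real system \eqref{3.59},\eqref{3.57} at $(\lambda,t)=(0,t_*)$, so that $\tr y^0\in\cG_{0,t_*}\cap\cF^1_\theta$ and $(y^0)''=t_*^2\,(V(t_*x)\otimes I_2)\,y^0$. Writing the nearby solution as $\tr y^0_s=\tr y^0+R_{(s+s_*)}\tr y^0$ and integrating by parts exactly as in \eqref{2.17}, all bulk terms except the $t$-derivative of the potential cancel, leaving
\[
\cQ_{s_*,\cF^1_\theta}(\tr y^0,\tr y^0)=\int_{-L}^{L}\Big(y^0,\tfrac{d}{dt}\big[t^2V(tx)\big]\big|_{t_*}\otimes I_2\,y^0\Big)_{\R^{2n}}dx,\qquad \tfrac{d}{dt}\big[t^2V(tx)\big]=2tV(tx)+t^2xV'(tx).
\]
This is the exact analogue of \eqref{3.69}, with the clean factor $-\tau^2$ there replaced by the $t$-derivative of the rescaled potential here.

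The heart of the proof, and the step I expect to be hardest, is recasting this crossing form into shapes whose signs are legible from the hypotheses; the point is that the single number $\cQ$ admits several equivalent expressions and each hypothesis is matched to a convenient one. I would combine a Pohozaev/virial identity — obtained by multiplying $(y^0)''=t_*^2(V(t_*x)\otimes I_2)y^0$ by $x(y^0)'$ and integrating — with the energy identity from pairing the equation with $y^0$. The $\theta$-periodic conditions enter decisively through the orthogonality of $I_n\otimes M_\theta$ (see \eqref{dfnJM}): it forces the boundary term $\big((y^0)'(x),y^0(x)\big)_{\R^{2n}}$ to take equal values at $x=\pm L$, and makes the $V$-weighted boundary contributions at $\pm L$ combine rather than remain as uncontrolled point values. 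Carrying out this trade — exchanging the raw density's factor $x\,V'(tx)$ for the stated combination $2tV(tx)-t^2V'(tx)$ while controlling every boundary term the virial identity produces — is the genuine obstacle. Once it is done, $\Sp(2tV(tx)-t^2V'(tx))\subset(-\infty,0)$ gives $\cQ<0$ (second alternative of~(i)) and $\Sp(2tV(tx)-t^2V'(tx))\subset(0,\infty)$ gives $\cQ>0$ (case~(ii)). For the first alternative of~(i) I would instead retain the boundary form of $\cQ$, in which $\Sp(V)\subset(-\infty,0]$ makes every term manifestly non-positive.

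One loose end remains: the identities hold with equality only if the crossings are regular, i.e. $\cQ$ is nondegenerate on $\cG_{0,t}\cap\cF^1_\theta$. The strict spectral conditions in the second alternative of~(i) and in~(ii) supply this automatically. For the non-strict hypothesis $\Sp(V)\subset(-\infty,0]$ one must argue separately that no nonzero crossing vector lies in the kernel of $\cQ$: in the boundary form the term $-\,2t_*^{-1}L\,|(y^0)'(-L)|^2$ is strictly negative unless $(y^0)'(\pm L)=0$, a degenerate configuration that I would exclude directly on the crossing space.
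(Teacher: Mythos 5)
Your proposal follows the paper's own proof essentially step for step: the paper likewise specializes Theorem~\ref{t3.1} to $r=0$, computes the crossing form on the edge where $\lambda\equiv0$ and $t$ varies as the integral of $y^0$ against the $t$-derivative of the rescaled potential (its \eqref{3.77}), and then performs exactly the virial-type integration by parts you describe as the ``genuine obstacle,'' using the $\theta$-periodic boundary conditions to arrive at the boundary expression $t_*\re\big(u(-t_*L),V(-t_*L)u(-t_*L)\big)_{\C^n}-t_*^{-1}\sum_{j}|u_j'(-t_*L)|^2$ in \eqref{3.78}, from which both sign hypotheses and the monotonicity of the Maslov index are read off. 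The only substantive differences are in your favor: you record the honest derivative $2tV(tx)+t^2xV'(tx)$ before trading it for the combination $2tV(tx)-t^2V'(tx)$ appearing in the hypotheses, and you raise the regularity of crossings under the non-strict condition $\Sp(V)\subset(-\infty,0]$, a point the paper passes over in silence.
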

\begin{proof}
We will use \eqref{3.66} with $r=0$, and compute the Maslov index 
\begin{equation}\lb{3.74}
{}^{1}/_{2}\mi (\cG_{0,t}|_{\tau\leq t\leq 1},\cF^1_{\theta}).
\end{equation}
Let $t_*\in [\tau,1]$ be a conjugate point, i.e. $\cG_{0,t_*}\cap\cF^1_{\theta}\not=\{0\}$. There exists a neighborhood $\Sigma_{t_*}\subset(\tau,1)$ of $t_*$ and a family $(t+t_*)\mapsto R_{(t+t_*)}$ in $C^1(\Sigma_{t_*}, \cL(\cG_{0,t_*}, \cG_{0,t_*}^{\perp}))$, $R_{s_*}=0_{8n}$, such that $\cG_{0,t}=\{u+R_{(t+t_*)}u\big| u\in \cG_{0,t_*}\}$ for all $(t+t_*)\in \Sigma_{t_*}$ (cf.\ discussion prior Definition \ref{def21}). Let us fix a solution $y^0$ to \eqref{3.59},\eqref{3.57} with $\lambda=0$ (this solution exists since $t_*$ is a conjugate point), and consider the family $\tr(y^0_t):=\tr(y^0)+R_{(t+t_*)}\tr(y^0)$ with small $|t|$. We calculate the crossing form using that $y_t^0$ solves \eqref{3.59} with $\lambda=0$ and that $y^0$ satisfies boundary conditions \eqref{3.57},
\begin{align}
&\omega(\tr(y^0), R_{(t+t_*)}\tr(y^0))\no\\
&=(y^0({-L}),- (y_t^0({-L})-y^0({-L}))')_{\R^{2n}}+\left(y^0(L),{y^{0}_t}'(L)-{y^0}'(L)\right)_{\R^{2n}}\no\\
&+(-{y^0}'({-L}),- (y_t^0({-L})-y^0({-L})))_{\R^{2n}}+\left({y^0}'(L), -(y_t^0(L)-y^0(L))\right)_{\R^{2n}}\no\\
&=\int_{-L}^L(-{y^0}'',y_t^0-y^0)_{\R^{2n}}-(y^0,-({y}^0_t-y_0)'')_{\R^{2n}}dx\no\\
&=\int_{-L}^L\Big(y^0(x),\big((t+t_*)^2V((t+t_*)x)\otimes I_2-t_*^2V(t_*x)\otimes I_2\big)y^0_t(x)\Big)_{\R^{2n}}dx.\lb{3.75}
\end{align}
Differentiating with respect to $t$ at $t=0$ yields
\begin{align}
&\cQ_{t_*,\cF_{\theta}^1 }(\tr(y^0),\tr(y^0)):=\frac{d}{dt}\omega(\tr(y^0), R_{(t+t_*)}\tr(y^0))\big|_{t=0}\no\\ 
&=\int_{-L}^L\Big(y^0(x),\big(2t_*V(t_*x)\otimes I_2-t_*^2 V'(t_*x)\otimes I_2\big)y^0(x)\Big)_{\R^{2n}}dx.\lb{3.77}
\end{align}
Using \eqref{2.4},\eqref{3.54.2} and integrating \eqref{3.77} by parts, one infers
\begin{align}
&\int_{-L}^L(y^0(x),\left(2t_*V(t_*x)\otimes I_2-t_*^2V'(t_*x)\otimes I_2\right)y^0(x))_{\R^{2n}}dx\no\\
&=t_*\re (u(-t_*L),V(-t_*L)u(-t_*L))_{\C^n}-t_*^{-1}\sum\limits_{j=1}^{n}|u'_j(-t_*L)|^2.\lb{3.78}
\end{align}
Using the hypothesis, equation \eqref{3.78} implies \eqref{3.73.2},\eqref{3.73.1}.
\end{proof}

\end{document}